\begin{document}
\def\mathscr{\mathcal}
\topmargin= -.2in \baselineskip=15pt
\newtheorem{theorem}{Theorem}[section]
\newtheorem{proposition}[theorem]{Proposition}
\newtheorem{lemma}[theorem]{Lemma}
\newtheorem{corollary}[theorem]{Corollary}
\newtheorem{conjecture}[theorem]{Conjecture}
\newtheorem{example}[theorem]{Example}
\theoremstyle{remark}
\newtheorem{remark}[theorem]{Remark}

\title {A Thom-Sebastiani Theorem in Characteristic $p$
\thanks{I would like to thank L. Illusie for informing me of unpublished
work of Deligne on the Thom-Sebastiani theorem, and thank P. Deligne
who made many suggestions and comments on earlier versions of this
paper. I am also thankful for the referee who points out many subtle
points that I ignored. My research is supported by the NSFC.}}

\author {Lei Fu\\
{\small Chern Institute of Mathematics and LPMC, Nankai University,
Tianjin 300071, P. R. China}\\
{\small leifu@nankai.edu.cn}}

\date{}
\maketitle

\begin{abstract}
Let $k$ be a perfect field of characteristic $p$, $X_i$ $(i=1,2)$
smooth $k$-schemes, $f_i:X_i\to\mathbb A_k^1$ two $k$-morphisms of
finite type, and $f:X_1\times_k X_2\to \mathbb A_k^1$ the morphism
defined by $f(z_1,z_2)=f_1(z_1)+f_2(z_2)$. For each $i\in\{1,2\}$,
let $x_i$ be a $k$-rational point in the fiber $f_i^{-1}(0)$ such
that $f_i$ is smooth on $X_i-\{x_i\}$. Using the $\ell$-adic Fourier
transformation and the stationary phase principle of Laumon, we
prove that the vanishing cycles complex of $f$ at $x=(x_1,x_2)$ is
the convolution product of the vanishing cycles complexes of $f_i$
at $x_i$ $(i=1,2)$.

\medskip
\noindent {\bf Key words:} vanishing cycles, nearby cycles, local
Fourier transformation, perverse sheaf.

\medskip
\noindent {\bf Mathematics Subject Classification:} 14F20.

\end{abstract}

\section*{Introduction}

Let $f_i:(\mathbb C^{n_i},0)\to (\mathbb C,0)$ $(i=1,2)$ be two
germs of analytic functions with isolated critical points, and
consider the germ $f:(\mathbb C^{n_1+n_2},0)\to(\mathbb C,0)$
defined by $f(z_1,z_2)=f_1(z_1)+f_2(z_2)$. The classical
Thom-Sebastiani Theorem (\cite{TS}) states that the vanishing cycles
complex of $f$ is isomorphic to the tensor product of those of $f_1$
and $f_2$. This theorem is not correct for a general field $k$.
Indeed, let $f:\mathbb A_k^n\to\mathbb A_k^1$ be the $k$-morphism
defined by the quadratic polynomial
$$f=z_1^2+\cdots+ z_n^2$$ for a field $k$ of characteristic $\not=2$,
$\ell$ a prime number distinct from the characteristic of $k$,
$\overline{\mathbb Q}_\ell$ an algebraic closure of the field
$\mathbb Q_\ell$ of $\ell$-adic numbers, and
$R\Phi_f(\overline{\mathbb Q}_\ell)$ the vanishing cycles complex
for the constant $\ell$-adic sheaf $\overline{\mathbb Q}_\ell$
relative to the morphism $f$. By \cite[XV 2.2]{DK}, we have
$R^i\Phi_f(\overline{\mathbb Q}_\ell)=0$ for $i\not=n-1$, and
$R^{n-1}\Phi_f(\overline{\mathbb Q}_\ell)$ is supported at the
origin. Let $K=k((t))$ be the formal Laurent series field. The stalk
$(R^{n-1}\Phi_f(\overline{\mathbb Q}_\ell))_{0}$ at the origin is a
one-dimensional $\overline{\mathbb Q}_\ell$-vector space on which
$\mathrm{Gal}(\overline K/K)$ acts continuously. By \cite[XV 2.2.5
D, E]{DK}, we have
\begin{eqnarray*}
\Big(R^{n-1}\Phi_{f}(\overline{\mathbb Q}_\ell)\Big)_{0}\cong
V_\chi\left(-\left[\frac{n}{2}\right]\right).
\end{eqnarray*}
for some one dimensional $\overline{\mathbb Q}_\ell$-vector space
$V_\chi$ on which $\mathrm{Gal}(\overline K/K)$ acts through a
character $$\chi:\mathrm{Gal}(\overline K/K)\to\overline{\mathbb
Q}_\ell^\ast$$ of order $2$, where $\left[\frac{n}{2}\right]$
denotes the largest integer that is less than or equal to
$\frac{n}{2}.$ Since $\left[\frac{n}{2}\right]$ in the Tate twist is
not linear in $n$, one sees immediately that the Thom-Sebastiani
theorem in terms of tensor product does not hold for a general field
$k$ and quadratic
$$f_1=z_1^2+\cdots+z_{n_1}^2,\quad
f_2=z_{n_1+1}^2+\cdots+z_{n_1+n_2}^2$$ if both $n_1$ and $n_2$ are
odd.

In \cite{DLetter} (unpublished), Deligne studies the analogue of the
Thom-Sebastiani theorem for the variation morphism, and realizes
that the tensor product in the Thom-Sebastiani theorem should be
replaced by the convolution product. Deligne constructs the
vanishing cycles complex $V_f$ and the monodromy $T_f$ associated to
$f$ with the aid of those $(V_{f_i}, T_{f_i})$ $(i=1,2)$ associated
to $f_i$. Topologically, $V_f$ retracts to the join $V_{f_1}\ast
V_{f_2}$. This should be translated using convolution.

In \cite{V}, using the Fourier transformation, the formula
$$\int e^{itf} = \int e^{itf_1}\int e^{itf_2},$$ and the asymptotic
expansions of such integrals for $t\to \infty$, Varchenko proves a
Thom-Sebastiani theorem for the Hodge spectrum. Inspired by
Varchenko's method, we prove a Thom-Sebastiani theorem in
characteristic $p$ using the Deligne-Fourier transformation.

The Thom-Sebastiani theorem has been studied extensively over
$\mathbb C$. See for example \cite{DN}, \cite{G}, \cite{N},
\cite{S}. Due to the use of the Deligne-Fourier transformation, our
method can not be direcly applied to the characteristic $0$ case.
But it suggests that one can use the Fourier transformation for
$D$-modules to study the Thom-Sebastiani theorem over $\mathbb C$.

\section{Main result}

Throughout this paper, $k$ is a perfect field of characteristic $p$,
and $\ell$ is a prime number distinct from $p$. We assume that for
any finite extension $k'$ of $k$, the groups
$H^i(\mathrm{Gal}(k'/k),\mathbb Z/\ell \mathbb Z)$ $(i\in \mathbb
N)$ are finite. For any scheme $X$ of finite type over $k$, the
category $D_c^b(X,\overline{\mathbb Q}_\ell)$ constructed in
\cite[1.1]{DWeil} is a triangulated category. Let $S$ be a henselian
trait of equal characteristic $p$ with generic point $\eta$ and
special point $s$ such that $k(s)=k$, let $f:X\to S$ be a morphism
of finite type, and let $K$ be an object in $D^b_c(X,
\overline{\mathbb Q}_\ell)$. We refer the reader to \cite[XIII]{DK}
for the definitions and properties of the nearby cycles complex
$R\Psi_f(K)$ and the vanishing cycles complex $R\Phi_f(K)$ relative
to the morphism $f$. We also denote $R\Psi_f$ and $R\Phi_f$ by
$R\Psi$ and $R\Phi$ for convenience.

\begin{lemma} \label{prelemma} Let $X\to S$ be a morphism of finite
type, let $x$ be a $k$-rational point in the special fiber $X_s$,
and let $K\in\mathrm{ob}\, D^b_c(X,\overline{\mathbb Q}_\ell)$.
Suppose $X-\{x\}\to S$ is smooth and the sheaves $\mathscr
H^q(K)|_{X-\{x\}}$ are lisse for all $q$.

(i) $R\Phi(K)|_{X_{\bar s}-\{x\}}=0$.

(ii) Suppose furthermore that $X$ is pure of dimension $n$ and
regular at $x$, and $K$ is a lisse sheaf. Then $R^i\Phi(K)$ vanishes
for $i\not=n-1$, and $R^{n-1}\Phi(K)$ is a skyscraper sheaf on
$X_{\bar s}$ supported at $x$.
\end{lemma}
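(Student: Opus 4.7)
My plan is to verify the vanishing stalkwise. Pick any geometric point $\bar y$ lying over $y \in X_{\bar s} \setminus \{x\}$. By hypothesis, there is an open neighborhood $U$ of $y$ in $X$ on which $U \to S$ is smooth and every $\mathscr H^q(K)$ is lisse. I would then invoke the local acyclicity of smooth morphisms (SGA 4$\tfrac12$, Th.\ finitude / SGA 7 XIII 2.1.5): for a lisse sheaf on a smooth $S$-scheme the specialization map $i^\ast K \to R\Psi(K)$ is an isomorphism, and this extends to any $K$ whose cohomology sheaves are lisse. Hence $R\Psi(K)_{\bar y} \cong K_{\bar y}$, and the defining triangle $i^\ast K \to R\Psi K \to R\Phi K \xrightarrow{+1}$ gives $R\Phi(K)_{\bar y}=0$.

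\textbf{Part (ii).} Since the statement is local on $X$ and $R\Phi$ is local on $X$, I may shrink and assume $X$ is pure regular of dimension $n$ globally. Then a lisse sheaf $K$, shifted to $K[n]$, is a perverse sheaf on $X$ (middle perversity). The main external input I would invoke is Gabber's theorem that the shifted vanishing-cycle functor $R\Phi[-1]$ is $t$-exact for the perverse $t$-structures, so $R\Phi(K[n])[-1]=R\Phi(K)[n-1]$ is perverse on $X_{\bar s}$. By part~(i), this perverse sheaf is supported on the closed point $x$. A perverse sheaf supported at a closed point is concentrated in cohomological degree $0$ and is a skyscraper there. Translating back through the shift $[n-1]$ gives $R^i\Phi(K)=0$ for $i\neq n-1$ and $R^{n-1}\Phi(K)$ a skyscraper at $x$, as desired.

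\textbf{Expected difficulty.} Part~(i) is a routine appeal to smooth local acyclicity. The substantive point in part~(ii) is Gabber's $t$-exactness of $R\Phi[-1]$ together with the fact that regularity of $X$ at $x$ is exactly what forces $K[n]$ to be perverse there; without this machinery one would be forced into a hands-on local cohomology / Milnor-fibre computation at $x$. Given the $t$-exactness, the rest is only bookkeeping with shifts and the classification of perverse sheaves supported at a point.
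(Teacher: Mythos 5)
Your proof is correct and follows essentially the same route as the paper: part (i) is smooth local acyclicity (a consequence of smooth base change, which is what the paper cites), and part (ii) uses that $K[n]$ is perverse on the regular pure $n$-dimensional $X$ together with Gabber's $t$-exactness of $R\Phi[-1]$ (the paper's reference [I, 4.6]) and the classification of perverse sheaves supported at a point. No substantive differences.
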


\begin{proof} See \cite[2.10]{I1}. We give a proof here for completeness.
(i) follows from the smooth base change theorem.
Under the assumption of (ii), $K[n]$ is a perverse sheaf on
$X$. By \cite[4.6]{I}, $R\Phi(K[n])[-1]$ is perverse. Combined with
(i), we see that $R\Phi(K[n])[-1]$ is a perverse sheaf on $X_{\bar
s}$ supported at $x$. Our assertion follows.
\end{proof}

Let $\mathbb A^1_{(0)}$ be the henselization of $\mathbb A_k^1$ at
$0$, let $\eta_0$ be its generic point, and let
$j:\eta_0\hookrightarrow \mathbb A^1_{(0)}$ be the canonical open
immersion. We can identify a
$\mathrm{Gal}(\bar\eta_0/\eta_0)$-module with a sheaf on $\eta_0$.
Let $(\mathbb A_k^1\times_k\mathbb A_k^1)_{(0,0)}$ be the
henselization of $\mathbb A_k^1\times_k\mathbb A_k^1$ at $(0,0)$,
and let
$$\tilde p_1,\tilde p_2,\tilde a:(\mathbb A_k^1\times_k\mathbb
A_k^1)_{(0,0)}\to \mathbb A^1_{(0)}$$ be the morphisms induced by
the two projections $p_1,p_2:\mathbb A_k^1\times_k\mathbb A_k^1\to
\mathbb A_k^1$ and the addition $a:\mathbb A_k^1\times_k\mathbb
A_k^1\to \mathbb A_k^1$ of the algebraic group $\mathbb A_k^1$,
respectively. Let $V_1$ and $V_2$ be $\overline{\mathbb
Q}_\ell$-representations of $\mathrm{Gal}(\bar\eta_0/\eta_0)$, and
regard them as sheaves on $\eta_0$. By \cite[2.7.1.3]{L}, the
vanishing cycles complex $R\Phi_{\tilde a}(\tilde p_1^\ast
j_{!}V_1\otimes^L \tilde p_2^\ast j_{!}V_2)$ relative to the
morphism $\tilde a$ is nonzero only at $(0,0)$ and at degree $1$.
Recall that the convolution product of $V_1$ and $V_2$
(\cite[2.7.2]{L}) is the $\mathrm{Gal}(\bar\eta_0/\eta_0)$-module
$$V_1\ast V_2=R^1\Phi_{\tilde a}(\tilde p_1^\ast j_{!} V_1\otimes \tilde p_2^\ast
j_{!}V_2)_{(0,0)}.$$ If $\mathscr V_1$ and $\mathscr V_2$ are
objects in $D_c^b(\eta_0,\overline{\mathbb Q}_\ell)$, we define
their convolution product to be
$$\mathscr V_1\underline{\ast} \mathscr V_2=
R\Phi_{\tilde a}(\tilde p_1^\ast j_{!} \mathscr V_1\otimes^L \tilde
p_2^\ast j_{!}\mathscr V_2)_{(0,0)}.$$ If $\mathscr V_i$ $(i=1,2)$
are objects $D_c^b(\eta_0,\overline{\mathbb Q}_\ell)$ defined by the
$\mathrm{Gal}(\bar\eta_0/\eta_0)$-modules $V_i$ $(i=1,2)$,
respectively, then $(\mathscr V_1\underline{\ast} \mathscr V_2)[1]$
is the object in $D_c^b(\eta_0,\overline{\mathbb Q}_\ell)$ defined
by the $\mathrm{Gal}(\bar\eta_0/\eta_0)$-module $V_1\ast V_2$.

Our main result is the following:

\begin{theorem}\label{mainthm} Let $f_i:X_i\to \mathbb A_k^1$ $(i=1,2)$ be
two flat $k$-morphisms of finite type,
$K_i\in\mathrm{ob}\,D_c^b(X_i,\overline{\mathbb Q}_\ell)$,
$X=X_1\times_S X_2$, $K=K_1\boxtimes^L K_2$, and $f:X=X_1\times_k
X_2\to\mathbb A_k^1$ the morphism defined by
$$f(z_1,z_2)=f_1(z_1)+f_2(z_2).$$ For each $i\in\{1,2\}$, let $x_i$ be
a $k$-rational point in the fiber $f_i^{-1}(0)$. Suppose that $X_i$
is regular, $\mathscr H^q(K_i)$ are lisse for all $q$, and
$f_i|_{X_i-\{x_i\}}$ is smooth. Denote by $x$ the $k$-rational point
$(x_1,x_2)$ on $X$. Denote respectively by $R\Phi_{f}$ and
$R\Phi_{f_i}$ the vanishing cycles functors relative to the
morphisms
$$X\times_{\mathbb A_k^1}\mathbb A^1_{(0)}\to \mathbb A^1_{(0)},\quad
X_i\times_{\mathbb A_k^1}\mathbb A^1_{(0)}\to \mathbb A^1_{(0)}$$
obtained from $f$ and $f_i$ by base change.

(i) $X$ is regular, and $f|_{X-\{x\}}$ is smooth.

(ii) As objects in $D_c^b(\eta_0,\overline{\mathbb Q}_\ell)$, we
have a canonical isomorphism
$$
(R\Phi_{f_1}(K_1))_{x_1}\underline\ast(R\Phi_{f_2}(K_2))_{x_2}\cong
(R\Phi_{f}(K))_x.$$

(iii) Suppose furthermore that for each $i$, $K_i$ is a lisse sheaf.
Let $n_i=\mathrm{dim}\,\mathscr O_{X_i,x_i}$, and let $n=n_1+n_2$.
Then as $\mathrm{Gal}(\bar\eta_0/\eta_0)$-modules, we have a
canonical isomorphism
$$
(R^{n_1-1}\Phi_{f_1}(K_1))_{x_1}\ast(R^{n_2-1}\Phi_{f_2}(K_2))_{x_2}\cong
(R^{n-1}\Phi_{f}(K))_x.$$
\end{theorem}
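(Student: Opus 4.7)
The proof uses the $\ell$-adic Fourier transformation --- which intertwines the convolution structure induced by addition with tensor product --- together with Laumon's stationary phase principle, which reads off local vanishing-cycle data at finite singular points from the local behaviour of Fourier transforms at $\infty$.

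I would first verify the smoothness claim in (i). At any $(z_1, z_2) \in f^{-1}(0)$ distinct from $x$, some $z_i \neq x_i$, so $f_i$ is smooth at $z_i$ by hypothesis; then $df = df_1 + df_2$ inherits the surjectivity of $df_i$ and $f$ is smooth there. Lemma \ref{prelemma}(i) therefore concentrates $R\Phi_X(K)$ at $x$. For the canonical isomorphism, set $F_i = Rf_{i!}K_i$ and $F = Rf_!K$ on $\mathbb A^1_k$; the factorization $f = a \circ (f_1 \times f_2)$ combined with proper base change yields $F \cong F_1 \ast F_2$, the global convolution of sheaves on $\mathbb A^1_k$ under addition. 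Since each $f_i$ is smooth away from $x_i$, the local structure of $F_i$ at $0$ picks up only the singularity at $x_i$ and is governed by $R\Phi_{X_i}(K_i)_{x_i}$; similarly $F$ at $0$ is governed by $R\Phi_X(K)_x$.

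Now apply $\mathrm{FT}_\psi$: convolution becoming tensor product, $\mathrm{FT}_\psi(F) \cong \mathrm{FT}_\psi(F_1) \otimes^L \mathrm{FT}_\psi(F_2)$. Laumon's stationary phase decomposes the local Galois representation of each $\mathrm{FT}_\psi(F_i)$ at $\infty$ as a direct sum, over the singular points $c$ of $F_i$, of local Fourier transforms $\mathrm{FT}_\psi^{(c, \infty)}$ applied to the corresponding local data; the component at $c = 0$ is $\mathrm{FT}_\psi^{(0, \infty)}(R\Phi_{X_i}(K_i)_{x_i})$. Doing the same for $\mathrm{FT}_\psi(F)$ and matching the $c = 0$ components under the tensor-product factorization identifies $\mathrm{FT}_\psi^{(0, \infty)}(R\Phi_X(K)_x)$ with $\mathrm{FT}_\psi^{(0, \infty)}$ applied to the convolution $R\Phi_{X_1}(K_1)_{x_1} \ast R\Phi_{X_2}(K_2)_{x_2}$ defined before the theorem (the latter matching on the Fourier-dual side precisely because that convolution is itself defined by a vanishing cycle for the addition map, whose Fourier dual is the tensor product). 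Inverting the local Fourier transform $\mathrm{FT}_\psi^{(0, \infty)}$ --- which is an equivalence on a suitable category of local representations --- gives (i). The main obstacle is the bookkeeping at $\infty$: $\mathrm{FT}_\psi(F_i)$ may have contributions at $\infty$ from singular points of $F_i$ other than $0$ (including $\infty$ itself, arising from non-properness of $f_i$), and one must isolate the $c = 0$ component cleanly. The most direct way is to work throughout with the localizations $F_i|_{(0)}$ and with the functor $\mathrm{FT}_\psi^{(0, \infty)}$ directly, bypassing the global Fourier transform.

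Part (ii) is then formal from (i). The regularity and lisseness hypotheses make $X = X_1 \times_k X_2$ regular at $x$ of dimension $n = n_1 + n_2$ with $K = K_1 \boxtimes^L K_2$ lisse, and Lemma \ref{prelemma}(ii) concentrates each $R\Phi_{X_i}(K_i)_{x_i}$ in degree $n_i - 1$ and $R\Phi_X(K)_x$ in degree $n - 1$. Since the sheaf-level convolution of Galois modules is defined as $R^1 \Phi_{\eta_0}$, it equals the complex-level convolution shifted by $[1]$; the degree shifts on the two sides of the isomorphism in (i) therefore match in total degree $n - 1 = (n_1 - 1) + (n_2 - 1) + 1$, and taking $(n-1)$-th cohomology yields the stated isomorphism of single $\mathrm{Gal}(\bar\eta_0 / \eta_0)$-modules.
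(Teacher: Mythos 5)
Your outline follows the same overall strategy as the paper --- pass through the $\ell$-adic Fourier transformation, use the fact that it converts the additive convolution into a tensor product, and read off the local data at $0$ from the local monodromy at $\infty'$ --- and your treatment of the smoothness claim and of the degree bookkeeping in (ii) is correct. But the step you describe as ``the local structure of $F_i$ at $0$ \dots is governed by $R\Phi_{X_i}(K_i)_{x_i}$'' is not a formal consequence of Laumon's stationary phase principle, and it is precisely where the real work lies. Since $f_i$ is only of finite type, $F_i=Rf_{i!}K_i$ is not $Rf_{i*}K_i$, and the vanishing cycle of $F_i$ at $0$ differs in general from $(R\Phi_{X_i}(K_i))_{x_i}$ by boundary contributions coming from the non-properness of $f_i$; likewise $\mathscr F(F_i)|_{\eta_{\infty'}}$ receives local Fourier transforms from every singular point of $F_i$ and from $\infty$, and these extra summands are not matched between the two sides of your tensor-product factorization. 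You correctly flag this as ``the main obstacle'' and propose to ``work throughout with the localizations \dots bypassing the global Fourier transform,'' but that localization is exactly the content of Lemma \ref{keylemma} (ii) and (iv): the identification of $\mathscr F^{(0,\infty')}\big(R\Phi_{Y}(K)_y\big)$ with $R\Phi_{\eta_{\infty'}}\big(\mathrm{pr}_1^*K\otimes^L(g\times\mathrm{id})^*\overline{\mathscr L_\psi(\pi t')}\big)_{(y,\infty')}$. Proving it requires constructing the comparison morphism by hand (Lemmas \ref{techlemma} and \ref{canonical}) and then reducing to the proper case via Deligne's compactification lemma \cite[2.5]{DMilnor}, where a global count (Lemma \ref{globallemma}) forces each local comparison map to be an isomorphism. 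None of this is supplied by your sketch, so as written the argument has a genuine gap at its central point.

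Two smaller points. First, once one works locally, the multiplicativity $\mathscr F(F)\cong\mathscr F(F_1)\otimes\mathscr F(F_2)$ has to be replaced by the K\"unneth formula for nearby cycles \cite[4.7]{I} applied to $\mathrm{pr}_{X_i}^*K_i\otimes^L(f_i\times\mathrm{id})^*\overline{\mathscr L_\psi(tt')}$ on $X_i\times_k\mathbb P^1_k$, together with the identity $\mathscr L_\psi((t_1+t_2)t')\cong\mathscr L_\psi(t_1t')\otimes\mathscr L_\psi(t_2t')$ of \cite[1.3.1]{Dtrig}; your global convolution identity does not survive the localization. Second, ``inverting $\mathscr F^{(0,\infty')}$'' is available for single Galois modules (part (ii)) by \cite[2.4.3 (i) c)]{L}, but for the complexes in part (i) one cannot invert a local Fourier transform of a complex directly: the paper instead chooses global objects $L,L_1,L_2$ on $\mathbb A^1_k-\{0\}$ restricting to the given local data, applies the global Fourier--Deligne transformation, and uses Lemma \ref{smalllemma} to descend the isomorphism at $\eta_{\infty'}$ back to the vanishing cycles at $0$. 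Your proposal does not address this step.
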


\begin{example}\end{example}
By our construction, the canonical isomorphisms in Theorem 1.2
(ii)-(iii) are associative. Let's use Theorem \ref{mainthm} to
calculate the vanishing cycles complex for the quadratic morphism
$$f:\mathbb A_k^n\to \mathbb A_k^1, \quad f(z)=z_1^2+\cdots+z_n^2$$
in the case where $\mathrm{char}\,k\not=2$.

First consider the case where $n=1$. Then $f$ is a finite morphism.
By the proper base change theorem, we have
$$R\Phi_f(\overline {\mathbb Q}_\ell)\cong
R\Phi(f_\ast\overline{\mathbb Q}_\ell).$$ We have
$$f_\ast \overline {\mathbb Q}_\ell\cong \overline {\mathbb Q}_\ell
\bigoplus j_!\mathscr K_{\chi_2},$$ where $j:\mathbb
A_k^1-\{0\}\hookrightarrow \mathbb A_k^1$ is the canonical open
immersion and $\mathscr K_{\chi_2}$ is the Kummer sheaf on $\mathbb
A_k^1-\{0\}$ associated to the nontrivial character
$$\chi_2:\mu_2(k)\to \{\pm 1\}$$ of order $2$.
Let $K=k((t))$ be the formal Laurent series field, and let
$V_{\chi_2}$ be the one dimensional $\overline{\mathbb
Q}_\ell$-vector space on which $\mathrm{Gal}(\overline K/K)$ acts
through the character
$$\mathrm{Gal}(\overline K/K) \to \overline{\mathbb
Q}_\ell^\ast,\quad \sigma\mapsto
\chi_2\Big(\frac{\sigma(\sqrt{t})}{\sqrt{t}}\Big).$$ For the
vanishing cycles functor with respect to the identity morphism on
$\mathbb A^1_{(0)}$, we have
$$R\Phi(\overline{\mathbb Q}_\ell)=0, \quad R\Phi(j_!\mathscr
K_{\chi_2})=V_{\chi_2}.$$ Indeed, let $\eta$ (resp. $0$) be the
generic (resp. closed) point of $\mathbb A^1_{(0)}$. For any object
$K$ in $D_c^b(\mathbb A^1_{(0)},\overline{\mathbb Q}_\ell)$,
$R\Phi(K)$ can be identified with the cone of the specialization
morphism $K_{\bar 0}\to K_{\bar \eta}$. If $K=\overline{\mathbb
Q}_\ell$, the specialization morphism is an isomorphism and hence
$R\Phi(\overline{\mathbb Q}_\ell)=0$. If $K=j_!\mathscr K_{\chi_2}$,
we have $K_{\bar 0}=0$ and $K_{\bar \eta}\cong V_{\chi_2}$. So we
have $R\Phi(j_!\mathscr K_{\chi_2})=V_{\chi_2}$. We thus have
$$R^0\Phi_f(\overline {\mathbb Q}_\ell)\cong V_{\chi_2}$$ for the
quadratic morphism $f(z)=z^2$. By Theorem 1.2, for $f=z_1^2+\cdots
+z_n^2$, we have $$(R\Phi_f^{n-1}(\overline {\mathbb
Q}_\ell))_0\cong \ast^n(V_{\chi_2}),$$  where $\ast^n$ denotes the
$n$-th convolution power. Let's calculate this convolution power.

Fix a nontrivial additive character $\psi:\mathbb Z/p\to
\overline{\mathbb Q}_\ell^\ast$. Let
$$g(\chi_2, \psi)=\sum_{x\in\mathbb F_p^\ast}\left(\frac{x}{p}\right)\psi(x)$$ be the
quadratic Gauss sum, and let $G(\chi_2,\psi)$ is be the one
dimensional $\overline{\mathbb Q}_\ell$-vector space with
$\mathrm{Gal}(\overline{\mathbb F}_p/\mathbb F_p)$ action so that
the geometric Frobenius element acts by scalar multiplication by
$-g(\chi_2,\psi)$. Let $V_{\epsilon^m}$ be the one dimensional
$\overline{\mathbb Q}_\ell$-vector space with
$\mathrm{Gal}(\overline{\mathbb F}_p/\mathbb F_p)$ action so that
the geometric Frobenius element acts by scalar multiplication by
$\left(\frac{-1}{p}\right)^m$. We can also regard $G(\chi_2,\psi)$
and $V_{\epsilon^m}$ as $\overline{\mathbb Q}_\ell$-vector spaces
with $\mathrm{Gal}(\overline K/K)$ action and with
$\mathrm{Gal}(\bar k/k)$ action through the canonical homomorphisms
$$\mathrm{Gal}(\overline K/K)\to\mathrm{Gal}(\bar k/k)\to
\mathrm{Gal}(\overline{\mathbb F}_p/\mathbb F_p).$$ We have
$$G(\chi_2,\psi)\otimes G(\chi_2, \psi)\cong V_{\epsilon}(-1).$$
This follows from the formula
$$(g(\chi_2,\psi))^2=\Big(\frac{-1}{p}\Big)p$$
for the quadratic Gauss sum. We thus have the following formula for
the $n$-th tensor power
$$\otimes^n (V_{\chi_2}\otimes
G(\chi_2,\psi))\cong \left\{\begin{array}{ll} V_{\epsilon^m}(-m)
&\hbox{if }n=2m \hbox { is
even},\\
V_{\chi_2}\otimes G(\chi_2,\psi)\otimes  V_{\epsilon^m}(-m)&\hbox{if
} n=2m+1 \hbox{ is odd}.
\end{array}\right.$$
Let $\mathscr F^{(0,\infty')}$ be Laumon's local Fourier
transformation \cite[2.4.2.3]{L} associated with the character
$\psi$. By \cite[2.5.3.1]{L}, we have
$$
\mathscr F^{(0,\infty')}\big(V_{\chi_2}\big)\cong V_{\chi_2}\otimes
G(\chi_2,\psi).$$ By \cite[2.7.2.2 (i)]{L}, we have
\begin{eqnarray*}
&&\mathscr F^{(0,\infty')}\Big(\ast^n\big(V_{\chi_2}\big)\Big)\\
&\cong& \otimes^n \mathscr F^{(0,\infty')}\big(V_{\chi_2}\big)\\
&\cong&\otimes^n(V_{\chi_2}\otimes
G(\chi_2,\psi))\\
&\cong& \left\{\begin{array}{ll} V_{\epsilon^m}(-m) &\hbox{if }n=2m
\hbox { is
even},\\
V_{\chi_2}\otimes G(\chi_2,\psi)\otimes  V_{\epsilon^m}(-m)&\hbox{if
} n=2m+1 \hbox{ is odd}.
\end{array}\right.
\end{eqnarray*}
On the other hand, the calculation of \cite[2.5.3.1]{L} shows that
\begin{eqnarray*}
\mathscr F^{(0,\infty')}\big(V_{\epsilon^m}(-m)\big)&\cong&
V_{\epsilon^m}(-m), \\
\mathscr F^{(0,\infty')}\big(V_{\chi_2}\otimes
V_{\epsilon^m}(-m)\big)&\cong& V_{\chi_2}\otimes
G(\chi_2,\psi)\otimes  V_{\epsilon^m}(-m).
\end{eqnarray*}
By the inversion formula for the local Fourier transformation
\cite[2.4.3 (i) c)]{L}, we must have
$$\ast^n\big(V_{\chi_2}\big)\cong \left\{\begin{array}{ll} V_{\epsilon^m}(-m)
&\hbox{if }n=2m \hbox { is
even},\\
V_{\chi_2}\otimes V_{\epsilon^m}(-m)&\hbox{if } n=2m+1 \hbox{ is
odd}.
\end{array}\right.$$ We thus get the following corollary:

\begin{corollary} Let $k$ be a perfect field of characteristic
$p\not=2$, let $f:\mathbb A_k^n\to\mathbb A_k^1$ be the $k$-morphism
defined by $$f(z)=z_1^2+\cdots +z_n^2,$$ and let
$R\Phi_f(\overline{\mathbb Q}_\ell)$ be the vanishing cycles complex
of the morphism $$\mathbb A_k^n\times_{\mathbb A_k^1}\mathbb
A^1_{(0)}\to \mathbb A^1_{(0)}$$ induced from $f$ by base change.
Then with the notation of 1.3, we have
$$\Big(R^{n-1}\Phi_f(\overline{\mathbb Q}_\ell)\Big)_0\cong
\left\{\begin{array}{ll} V_{\epsilon^m}(-m) &\hbox{if }n=2m \hbox {
is
even},\\
V_{\chi_2}\otimes V_{\epsilon^m}(-m)&\hbox{if } n=2m+1 \hbox{ is
odd}.
\end{array}\right.$$
\end{corollary}

Note that the above formulas are compatible with those of \cite[XV
2.2.5 D, E]{DK} for algebraically closed field $k$.

\section{Use of Laumon's local Fourier transformation}

The Artin-Schreier morphism
$$\mathbb A_k^1\to\mathbb A_k^1,\quad t\mapsto t^p-t$$ is a
$\mathbb Z/p$-torsor. Fix a nontrivial additive character
$\psi:\mathbb Z/p\to \overline{\mathbb Q}_\ell^\ast$.
Pushing-forward the Artin-Schreier torsor using $\psi^{-1}$, we get
a lisse sheaf $\mathscr L_\psi$ on $\mathbb A_k^1$. Denote the
inverse image of $\mathscr L_\psi$ under the morphism
$$\mathbb A_k^1\times_k \mathbb A_k^1\to \mathbb A_k^1, \quad (t,t')\to
tt'$$ by $\mathscr L_\psi(tt')$. Let $\mathbb A_k^1\times_k \mathbb
A_k^1\hookrightarrow \mathbb A_k^1\times_k\mathbb P_k^1$ be the open
immersion defined by the canonical open immersion $\mathbb
A_k^1=\mathbb P_k^1-\{\infty'\}\hookrightarrow \mathbb P_k^1$.
Denote by $\overline {{\mathscr L}_\psi(tt')}$ the sheaf on $\mathbb
A_k^1\times_k\mathbb P_k^1$ obtained from the sheaf $\mathscr
L_\psi(tt')$ on $\mathbb A_k^1\times_k \mathbb A_k^1$ by extension
by zero. To distinguish the two factors in $\mathbb A_k^1\times_k
\mathbb A_k^1$ and in $\mathbb A_k^1\times_k \mathbb P_k^1$, we
denote objects related to the second factor by symbols with the
superscript $'$. Denote by $\mathbb P^1_{(\infty')}$ the
henselization of $\mathbb P_k^1$ at $\infty'$, denote by
$\eta_{\infty'}$ its generic point, and denote the restriction of
$\overline {{\mathscr L}_\psi(tt')}$ to $\mathbb
A^1_{(0)}\times_k\mathbb P_{(\infty')}^1$ also by $\overline
{{\mathscr L}_\psi(tt')}$. Fix a uniformizer $\pi$ of $S$. We have a
$k$-morphism $S\to \mathbb A_k^1$ induced by the $k$-homomorphism
$$k[t]\to \Gamma(S,\mathscr O_S), \quad t\mapsto \pi.$$ It induces a
$k$-morphism $S\to \mathbb A^1_{(0)}$ which we denote also by $\pi$.
Denote by $\overline{{\mathscr L}_\psi(\pi t')}$ the inverse image
of $\overline{{\mathscr L}_\psi(tt')}$ under the morphism
$$S\times_k \mathbb P^1_{(\infty')}\stackrel{\pi\times
\mathrm{id}_{\mathbb P^1_{(\infty')}}} \to \mathbb A^1_{(0)}\times_k
\mathbb P^1_{(\infty')}.$$ Our proof of Theorem \ref{mainthm} relies
on the following lemma:

\begin{lemma}\label{keylemma} Let $g:Y\to S$ be a flat morphism
of finite type, let $K\in \mathrm{ob}\,D_c^b(Y,\overline{\mathbb
Q}_\ell)$, and let $y$ be a $k$-rational point in the special fiber
$g^{-1}(s)$. Suppose that $Y$ is regular and pure of dimension $n$,
$g|_{Y-\{y\}}$ is smooth, and the sheaves $\mathscr
H^q(K)|_{Y-\{y\}}$ are lisse for all $q$.

(i) $R\Phi_{\eta_{\infty'}}\Big(p_1^\ast
j_!(R\Phi_{g}(K)_{y})\otimes^L \overline{\mathscr L_\psi(\pi
t')}\Big)$ is supported at $(s,\overline \infty')$, and
$R\Phi_{\eta_{\infty'}}\Big({\mathrm{pr}}_1^\ast K\otimes^L (g\times
\mathrm{id}_{{\mathbb P}_{(\infty')}^1})^\ast \overline{\mathscr
L_\psi(\pi t')}\Big)$ is supported at $(y,\overline \infty')$, where
$R\Phi_{g}(K)_{y}$ is a complex of
$\mathrm{Gal}(\bar\eta/\eta)$-module and is regarded as an object in
$D_c^b(\eta,\overline{\mathbb Q}_\ell)$, $j:\eta\hookrightarrow S$
is the canonical open immersion, $R\Phi_{\eta_{\infty'}}$ denotes
the vanishing cycles functors for the projections
$$S\times_k \mathbb P^1_{(\infty')}\to \mathbb
P^1_{(\infty')}, \quad Y\times_k \mathbb P^1_{(\infty')}\to \mathbb
P^1_{(\infty')},$$ and $p_1, {\mathrm{pr}}_1$ are the projections
$$p_1:
S\times_k \mathbb P^1_{(\infty')}\to S,\quad
{\mathrm{pr}}_1:Y\times_k \mathbb P^1_{(\infty')}\to Y.$$

(ii) We have a canonical isomorphism
$$
R\Phi_{\eta_{\infty'}}\Big(p_1^\ast j_!(R\Phi_{g}(K)_{y})\otimes^L
\overline{\mathscr L_\psi(\pi t')}\Big)_{(s,\infty')}\cong
R\Phi_{\eta_{\infty'}}\Big({\mathrm{pr}}_1^\ast K\otimes^L (g\times
\mathrm{id}_{{\mathbb P}_{(\infty')}^1})^\ast \overline{\mathscr
L_\psi(\pi t')}\Big)_{(y,\infty')}.$$

(iii) Suppose $K$ is a lisse sheaf. Then
$R^i\Phi_{\eta_{\infty'}}\Big({\mathrm{pr}}_1^\ast K\otimes^L
(g\times \mathrm{id}_{{\mathbb P}_{(\infty')}^1})^\ast
\overline{\mathscr L_\psi(\pi t')}\Big)$ vanishes for $i\not=n$, and
$R^n\Phi_{\eta_{\infty'}}\Big({\mathrm{pr}}_1^\ast
K\otimes^L(g\times \mathrm{id}_{{\mathbb P}_{(\infty')}^1})^\ast
\overline{\mathscr L_\psi(\pi t')}\Big)$ is a skyscraper sheaf on
$Y\times_k{\overline \infty'}$ supported at $(y,\overline \infty')$.

(iv) Under the condition of (iii), we have a canonical isomorphism
of $\mathrm{Gal}(\bar\eta_{\infty'}/\eta_{\infty'})$-modules
$$\mathscr
F^{(0,\infty')}\Big(R^{n-1}\Phi_{g}(K)_{y}\Big)\cong
R^n\Phi_{\eta_{\infty'}}\Big({\mathrm{pr}}_1^\ast K\otimes^L
(g\times \mathrm{id}_{{\mathbb P}_{(\infty')}^1})^\ast
\overline{\mathscr L_\psi(\pi t')}\Big)_{(y,\infty')}.$$
\end{lemma}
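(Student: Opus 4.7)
The plan is to dispatch the four assertions in order: (i) by applying Lemma~\ref{prelemma}(i) to each complex at its distinguished point; (ii) by constructing a canonical stalk isomorphism through smooth base change and a specialization triangle; (iii) by applying Lemma~\ref{prelemma}(ii); and (iv) by comparing with the defining property of Laumon's local Fourier transformation.

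For (i), I would apply Lemma~\ref{prelemma}(i) to the first complex $L_1=p_1^\ast j_!(R\Phi_Y(K)_y)\otimes^L\overline{\mathscr L_\psi(\pi t')}$ at the point $(s,\infty')$: the projection to $\mathbb P^1_{(\infty')}$ is essentially smooth, and the cohomology sheaves of $L_1$ are lisse off $(s,\infty')$ because $p_1^\ast j_!(R\Phi_Y(K)_y)$ vanishes on $s\times\mathbb P^1_{(\infty')}$, the factor $\overline{\mathscr L_\psi(\pi t')}$ vanishes on $S\times\{\infty'\}$, and both factors are lisse on the open stratum $\eta\times_k\mathbb A^1_{(0)}$. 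For $L_2=\mathrm{pr}_1^\ast K\otimes^L(g\times\mathrm{id})^\ast\overline{\mathscr L_\psi(\pi t')}$ at $(y,\infty')$ the analogous check uses smoothness of $g|_{Y-\{y\}}$ together with $S\to\mathrm{Spec}\,k$, lisseness of $\mathscr H^q(K)|_{Y-\{y\}}$, and the Artin-Schreier vanishing at $\infty'$.

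For (ii), the heart of the lemma, I would leverage the distinguished triangle on $Y$
\[
j'_!(K|_{Y_\eta})\to K\to i'_\ast(K|_{Y_s})\xrightarrow{+1},
\]
with $i':Y_s\hookrightarrow Y$ and $j':Y_\eta\hookrightarrow Y$. After tensoring with $(g\times\mathrm{id})^\ast\overline{\mathscr L_\psi(\pi t')}$ and applying $R\Phi_{\eta_{\infty'}}$ at $(y,\bar\infty')$, the third vertex's contribution can be directly computed because $\pi\circ g\equiv 0$ on $Y_s$ reduces the Artin-Schreier factor there to the extension by zero of the constant sheaf; combined with the specialization triangle for $g$, this absorbs the $K|_{Y_s}$ contribution. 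The first vertex is related to $L_1$ through smooth base change under $g\times\mathrm{id}:Y_\eta\times_k\mathbb P^1_{(\infty')}\to S\times_k\mathbb P^1_{(\infty')}$ (smooth because $g|_{Y_\eta}$ is smooth), identifying its $R\Phi_{\eta_{\infty'}}$-stalk at $(y,\bar\infty')$ with the $R\Phi_{\eta_{\infty'}}$-stalk of $L_1$ at $(s,\bar\infty')$ once $K|_{Y_\eta}$ is substituted by $R\Phi_Y(K)_y$ via the defining triangle of $R\Phi_Y$.

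For (iii), Lemma~\ref{prelemma}(ii) applies to $L_2$ at $(y,\infty')$: regularity of $Y$ at $y$ yields regularity of $Y\times_k\mathbb P^1_{(\infty')}$ at $(y,\infty')$ of dimension $n+1$, and on the open $(Y-\{y\})\times_k\mathbb A^1_{(0)}$ the complex is a lisse sheaf; concentration in degree $n=(n+1)-1$ with skyscraper support at $(y,\bar\infty')$ follows. For (iv), Laumon's $\mathscr F^{(0,\infty')}$ in \cite[2.4.2.3]{L} is by construction the $R^n\Phi_{\eta_{\infty'}}$-stalk appearing on the left side of the isomorphism in (ii), transported through the identification $\pi:S\to\mathbb A^1_{(0)}$; combining this with (ii) and (iii) yields the claim. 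The principal obstacle will be the canonicity of the isomorphism in (ii): threading specialization and base-change maps through the two vanishing-cycle triangles (for $g$ and for the projection $p_2$) requires delicate compatibility checks to avoid ad hoc choices.
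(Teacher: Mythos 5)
The proposal breaks down at part (ii), which is the heart of the lemma. Your key step identifies the $R\Phi_{\eta_{\infty'}}$-stalk at $(y,\overline\infty')$ with the stalk at $(s,\overline\infty')$ ``through smooth base change under $g\times\mathrm{id}$'' --- but $g$ is precisely \emph{not} smooth at $y$, so no base-change theorem applies at the one point that matters; if $g$ were smooth there, the vanishing cycles would simply vanish and the lemma would be empty. The actual difficulty is to commute the vanishing-cycle functor $R\Psi_{\eta_{\infty'}}$ past a pushforward along $g\times\mathrm{id}$ (so as to replace $\mathrm{pr}_1^\ast K$ on $Y\times_k\mathbb P^1_{(\infty')}$ by a complex on $S\times_k\mathbb P^1_{(\infty')}$), and the base-change morphism $R\Psi\circ Rh_\ast\to Rh_\ast\circ R\Psi$ is only formally an isomorphism for proper $h$. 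The paper's proof is organized entirely around this obstacle: it (a) proves a gluing lemma (Lemma \ref{techlemma}) replacing a complex $K$ on $S$ by $j_!R\Phi_\eta(K)$ modulo a constant complex killed by the Artin--Schreier twist, (b) identifies $R\Phi(Rg_{(y)\ast}K)$ with $(R\Phi_Y(K))_y$ on henselizations (Lemma \ref{canonical}) to get a canonical comparison \emph{morphism}, (c) proves this morphism is an isomorphism when $g$ extends to a proper $f:X\to S$ smooth outside finitely many special points, by summing the local maps over all singular points and matching against a global proper-base-change identity (Lemma \ref{globallemma}), and (d) reduces the general case to the proper one using Deligne's local compactification with isolated singularities (SGA 7 II, La formule de Milnor, 2.5). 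None of these ingredients --- the henselization comparison, the properness reduction, or the direct-sum bookkeeping that forces each local map to be an isomorphism --- appears in your outline, and your triangle $j'_!(K|_{Y_\eta})\to K\to i'_\ast(K|_{Y_s})$ does not substitute for them: the third vertex does not obviously die (the constant Artin--Schreier factor on $Y_s\times_k\mathbb P^1_{(\infty')}$ has nonzero vanishing cycles), and ``absorbing'' it into the specialization triangle for $g$ is exactly the unproven commutation.

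Secondary issues: in (i) and (iii) you invoke Lemma \ref{prelemma}, but its hypotheses fail --- the complexes are not lisse away from the distinguished point (they vanish along $\infty'$ and along $s\times_k\mathbb P^1_{(\infty')}$, and $\mathscr L_\psi(\pi t')$ is wildly ramified along $\infty'$), and lisseness on the open stratum $\eta\times_k\eta_{\infty'}$ does not force $R\Phi$ to vanish at $(\eta,\overline\infty')$. The support statement for $p_1^\ast j_!(R\Phi_Y(K)_y)\otimes^L\overline{\mathscr L_\psi(\pi t')}$ is Laumon's stationary-phase input \cite[2.4.2.2]{L}, resting on the total wild ramification of $\mathscr L_\psi(\pi t')$ at $\infty'$ for $\pi\neq 0$ \cite[1.3.1.2]{L}; you gesture at this for the second complex but not the first. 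For (iii) the paper runs the perversity argument directly (perversity of the restriction to $Y\times_k\eta_{\infty'}$, Illusie's theorem that $R\Psi$ preserves perversity, and $R\Phi=R\Psi$ since the complex vanishes on the special fiber), which is what your appeal to Lemma \ref{prelemma}(ii) would have to be unwound into. Part (iv) is fine once (ii) and (iii) are in place.
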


We will prove Lemma \ref{keylemma} in \S 3. Let's deduce Theorem
\ref{mainthm} from Lemma \ref{keylemma} and the K\"unneth formula
for the nearby cycles functor in \cite[4.7]{I}.

\begin{proof}[Proof of Theorem \ref{mainthm}] Since $X_i$ $(i=1,2)$ are regular
and $k$ is perfect, $X_i$ are smooth over $k$, and hence
$X=X_1\times_k X_2$ is smooth over $k$. It follows that $X$ is
regular. Note that for any smooth $k$-scheme $Y$ and any
$k$-morphism of finite type $f:Y\to\mathbb A_k^1$, $f$ is smooth at
a $k$-point $y$ of $Y$ if and only if $df(y)\in \Omega^1_{Y/k,
y}\otimes_{\mathscr O_Y,y} k(y)$ is nonzero, where by abuse of
notation, we denote also by $f$ the image of $t$ under the
$k$-homomorphism
$$k[t]\to \Gamma(Y,\mathscr O_Y)$$ corresponding to the $k$-morphism
$f$. This follows from \cite[17.11.1 (b)$\Leftrightarrow({\mathrm
c}')$]{EGA}.

Let's prove $f|_{X-\{x\}}$ is smooth. By base change to an algebraic
closure of $k$, we are reduced to the case where $k$ is
algebraically closed. Suppose $x^{(0)}=(x_1^{(0)}, x_2^{(0)})$ is a
$k$-rational point in $X=X_1\times_k X_2$ where $f$ is not smooth.
By our assumption, $f$ is smooth on $(X_1-\{x_1\})\times
(X_2-\{x_2\})$. It follows that either $x_1^{(0)}=x_1$ or
$x_2^{(0)}=x_2$. Without loss of generality, assume $x_1^{(0)}=x_1$.
If $x_2^{(0)}\not=x_2$, then by our assumption, $f_2$ is smooth at
$x_2^{(0)}$. By the discussion above, $df_2$ defines a nonzero
element in $\Omega^1_{X_2/k,x_2^{(0)}}\otimes_{\mathscr
O_{X_2,x_2^{(0)}}}k(x_2^{(0)})$. Under the identification
$$\Omega^1_{X/k}\cong p_1^\ast\Omega^1_{X_1/k}\bigoplus
p_2^\ast\Omega^1_{X_2/k},$$ where $p_i:X_1\times_k X_2\to X_i$
$(i=1,2)$ are the projections, $df$ is identified with
$(df_1,df_2)$. It follows that $df$ defines a nonzero element in
$\Omega^1_{X/k,x^{(0)}}\otimes_{\mathscr O_{X,x^{(0)}}}k(x^{(0)})$.
So $f$ is smooth at $x^{(0)}$. Contradiction. So we must have
$x_2^{(0)}=x_2$ and hence $x^{(0)}=x$. This proves (i).

By \cite[Sommes trig. 1.3.1]{DSGA}, we have a canonical isomorphism
$$(f_1\times \mathrm{id}_{{\mathbb P}_k^1})^\ast
\overline{\mathscr L_\psi(tt')}\boxtimes^L (f_2\times
\mathrm{id}_{{\mathbb P}_k^1})^\ast \overline{\mathscr
L_\psi(tt')}\cong (f\times \mathrm{id}_{{\mathbb P}_k^1})^\ast
\overline{\mathscr L_\psi(tt')},$$ where the external tensor product
is taken with respect to the Cartesian diagram
$$\begin{array}{ccc}
X_1\times_k X_2\times_k\mathbb P_k^1 &\to& X_2\times_k \mathbb P_k^1\\
\downarrow&&\downarrow\\
X_1\times_k \mathbb P_k^1&\to& \mathbb P_k^1.
\end{array}$$
So we have
$$\Big({\mathrm{pr}}_{X_1}^\ast
K_1\otimes^L(f_1\times \mathrm{id}_{{\mathbb P}_k^1})^\ast
\overline{\mathscr
L_\psi(tt')}\Big)\boxtimes^L\Big({\mathrm{pr}}_{X_2}^\ast
K_2\otimes^L (f_2\times \mathrm{id}_{{\mathbb P}_k^1})^\ast
\overline{\mathscr L_\psi(tt')}\Big)\cong {\mathrm{pr}}_X^\ast
K\otimes^L (f\times \mathrm{id}_{{\mathbb P}_k^1})^\ast
\overline{\mathscr L_\psi(tt')},$$ where
$${\mathrm{pr}}_X:X_1\times_k X_2\times_k \mathbb P_k^1\to
X_1\times_k X_2, \quad {\mathrm{pr}}_{X_1}:X_1\times_k \mathbb
P_k^1\to X_1, \quad {\mathrm{pr}}_{X_2}:X_2\times_k \mathbb P_k^1\to
X_2$$ are the projections. Therefore, we deduce from \cite[4.7]{I} a
canonical isomorphism
\begin{eqnarray*}
&&R\Psi_{\eta_{\infty'}}\Big({\mathrm{pr}}_{X_1}^\ast K_1\otimes^L
(f_1\times \mathrm{id}_{{\mathbb P}_k^1})^\ast \overline{\mathscr
L_\psi(tt')}\Big)\boxtimes^L
R\Psi_{\eta_{\infty'}}\Big({\mathrm{pr}}_{X_2}^\ast K_2
\otimes^L(f_2\times \mathrm{id}_{{\mathbb P}_k^1})^\ast
\overline{\mathscr
L_\psi(tt')}\Big)\\
&\cong&R\Psi_{\eta_{\infty'}}\Big({\mathrm{pr}}_X^\ast K\otimes^L
(f\times \mathrm{id}_{{\mathbb P}_k^1})^\ast \overline{\mathscr
L_\psi(tt')}\Big).
\end{eqnarray*}
Since ${\mathrm{pr}}_X^\ast K\otimes^L(f\times \mathrm{id}_{{\mathbb
P}_k^1})^\ast \overline{\mathscr L_\psi(tt')}$ vanishes on
$X\times_k\overline\infty'$, we have
$$R\Phi_{\eta_{\infty'}}\Big({\mathrm{pr}}_X^\ast K\otimes^L(f\times \mathrm{id}_{{\mathbb
P}_k^1})^\ast \overline{\mathscr L_\psi(tt')}\Big)\cong
R\Psi_{\eta_{\infty'}}\Big({\mathrm{pr}}_X^\ast K\otimes^L (f\times
\mathrm{id}_{{\mathbb P}_k^1})^\ast \overline{\mathscr
L_\psi(tt')}\Big),$$ and we have similar isomorphisms if we replace
$f$ by $f_i$ and ${\mathrm{pr}}_X^\ast K$ by
${\mathrm{pr}}_{X_i}^\ast K_i$. So we have a canonical isomorphism
\begin{eqnarray}\label{iso(0)}
\begin{split}{}
&R\Phi_{\eta_{\infty'}}\Big({\mathrm{pr}}_{X_1}^\ast
K_1\otimes^L(f_1\times \mathrm{id}_{{\mathbb P}_k^1})^\ast
\overline{\mathscr L_\psi(tt')}\Big)\boxtimes^L
R\Phi_{\eta_{\infty'}}\Big({\mathrm{pr}}_{X_2}^\ast
K_2\otimes^L(f_2\times \mathrm{id}_{{\mathbb P}_k^1})^\ast
\overline{\mathscr
L_\psi(tt')}\Big)\\
\cong\;& R\Phi_{\eta_{\infty'}}\Big({\mathrm{pr}}_X^\ast
K\otimes^L(f\times \mathrm{id}_{{\mathbb P}_k^1})^\ast
\overline{\mathscr L_\psi(tt')}\Big).
\end{split}
\end{eqnarray}
Combined with Lemma \ref{keylemma} (i), we get
\begin{eqnarray*}
&&R\Phi_{\eta_{\infty'}}\Big({\mathrm{pr}}_{X_1}^\ast
K_1\otimes^L(f_1\times \mathrm{id}_{{\mathbb P}_k^1})^\ast
\overline{\mathscr L_\psi(tt')}\Big)_{(x_1,\infty')}\otimes^L
R\Phi_{\eta_{\infty'}}\Big({\mathrm{pr}}_{X_2}^\ast
K_2\otimes^L(f_2\times \mathrm{id}_{{\mathbb P}_k^1})^\ast
\overline{\mathscr
L_\psi(tt')}\Big)_{(x_2,\infty')}\\
&\cong& R\Phi_{\eta_{\infty'}}\Big({\mathrm{pr}}_X^\ast
K\otimes^L(f\times \mathrm{id}_{{\mathbb P}_k^1})^\ast
\overline{\mathscr L_\psi(tt')}\Big)_{(x,\infty')}.
\end{eqnarray*}
By Lemma \ref{keylemma} (ii), this induces a canonical isomorphism
\begin{equation}\label{iso(1)}
\begin{split}{}
&R\Phi_{\eta_{\infty'}}\Big(p_1^\ast
j_!(R\Phi_{f_1}(K_1)_{x_1})\otimes^L \overline{\mathscr
L_\psi(tt')}\Big)_{(0,\infty')}\otimes^L
R\Phi_{\eta_{\infty'}}\Big(p_1^\ast
j_!(R\Phi_{f_2}(K_2)_{x})\otimes^L \overline{\mathscr
L_\psi(tt')}\Big)_{(0,\infty')}\\
\cong\;&R\Phi_{\eta_{\infty'}}\Big(p_1^\ast
j_!(R\Phi_{f}(K)_{x})\otimes^L \overline{\mathscr
L_\psi(tt')}\Big)_{(0,\infty')},
\end{split}
\end{equation}
and under the assumption of Theorem \ref{mainthm} (iii), we get a
canonical isomorphism
\begin{eqnarray*}
&&\mathscr
F^{(0,\infty')}\Big(R^{n_1-1}\Phi_{f_1}(K_1)_{x_1}\Big)\otimes
\mathscr F^{(0,\infty')}\Big(R^{n_2-1}\Phi_{f_2}(K_2)_{x_2}\Big)\\
&\cong&\mathscr F^{(0,\infty')}\Big(R^{n-1}\Phi_{f}(K)_{x}\Big).
\end{eqnarray*}
By \cite[2.7.2.2 (i)]{L} and the inversion formula for local Fourier
transformation \cite[2.4.3 (i) c)]{L}, under the assumption of
Theorem \ref{mainthm} (iii), we have a canonical isomorphism
$$R^{n_1-1}\Phi_{f_1}(K_1)_{x_1}
\ast R^{n_2-1}\Phi_{f_2}(K_2)_{x_2}\cong R^{n-1}\Phi_{f}(K)_{x}.$$

The argument in the proof of various results in \cite{L} also shows
that in general we have a canonical isomorphism
$$R\Phi_{f_1}(K_1)_{x_1} \underline\ast R\Phi_{f_2}(K_2)_{x_2} \cong
R\Phi_{f}(K)_{x}.$$ We give a detailed argument for completeness.
Choose $L, L_1, L_2\in \mathrm{ob}\,D_c^b(\mathbb A_k^1-\{0\},
\overline{\mathbb Q}_\ell)$ such that $$L|_{\eta_0}\cong
R\Phi_f(K)_x, \quad L_1|_{\eta_0}\cong R\Phi_{f_1}(K_1)_{x_1}, \quad
L_2|_{\eta_0}\cong R\Phi_{f_2}(K_2)_{x_2},$$ and such that $\mathscr
H^q(L)$, $\mathscr H^q(L_1)$, $\mathscr H^q(L_2)$ are lisse on
$\mathbb A_k^1-\{0\}$ and tamely ramified at $\infty$ for all $q$.
The existence of $L, L_1, L_2$ follows from Lemma \ref{derivedlemma}
below. Let
$$\mathscr F:D_c^b(\mathbb A_k^1,\overline{\mathbb Q}_\ell)\to
D_c^b(\mathbb A_k^1,\overline{\mathbb Q}_\ell)$$ be the
Deligne-Fourier transformation (\cite[1.2.1.1]{L}), and let
$\iota:\mathbb A_k^1-\{0\}\hookrightarrow \mathbb A_k^1$ be the
canonical open immersion. By the same argument as the proof of
\cite[2.3.3.1]{L} (which relies on \cite[1.3.1.2]{L}), by the fact
that $\mathscr H^q(L)$ and $\mathscr H^q(L_i)$ are lisse on $\mathbb
A_k^1-\{0\}$ and tamely ramified at $\infty$,  and by \cite[2.4.3
(iii) b)]{L}, we have
\begin{eqnarray*}
\mathscr F(\iota_!L)[-1]|_{\eta_{\infty'}}&\cong&
R\Phi_{\eta_{\infty'}}\Big(p_1^\ast j_!(R\Phi_{f}(K)_{x})\otimes^L
\overline{\mathscr L_\psi(tt')}\Big)_{(0,\infty')},\\
\mathscr F(\iota_!L_1)[-1]|_{\eta_{\infty'}}&\cong&
R\Phi_{\eta_{\infty'}}\Big(p_1^\ast
j_!(R\Phi_{f_1}(K_1)_{x_1})\otimes^L \overline{\mathscr
L_\psi(tt')}\Big)_{(0,\infty')},\\
\mathscr
F(\iota_!L_2)[-1]|_{\eta_{\infty'}}&\cong&R\Phi_{\eta_{\infty'}}\Big(p_1^\ast
j_!(R\Phi_{f_2}(K_2)_{x_2})\otimes^L \overline{\mathscr
L_\psi(tt')}\Big)_{(0,\infty')}.
\end{eqnarray*}
So we can write the isomorphism (\ref{iso(1)}) as
$$(\mathscr F(\iota_!L_1)\otimes^L \mathscr F(\iota_!L_2))[-2]|_{\eta_{\infty'}}
\cong \mathscr F(\iota_!L)[-1]|_{\eta_{\infty'}}.$$ By
\cite[1.2.2.7]{L}, we get the isomorphism
$$\mathscr F(\iota_!L_1\ast \iota_!L_2)|_{\eta_{\infty'}}
\cong \mathscr F(\iota_!L)|_{\eta_{\infty'}},$$ where
$\iota_!L_1\ast \iota_!L_2$ is the (global) convolution product of
$\iota_!L_1$ and $\iota_!L_2$ (\cite[1.2.2.6]{L}). By the above
isomorphism and Lemma \ref{smalllemma} below, we have
$$R\Phi_{\eta_0}(\iota_!L_1\ast \iota_!L_2)\cong
R\Phi_{\eta_0}(\iota_!L),$$ where $R\Phi_{\eta_0}$ denotes the
vanishing cycles functor relative to the identity morphism on
$\mathbb A^1_{(0)}$. By \cite[2.7.1.1 (iii)]{L}, this last
isomorphism is exactly
$$R\Phi_{f_1}(K_1)_{x_1} \underline\ast
R\Phi_{f_2}(K_2)_{x_2} \cong R\Phi_{f}(K)_{x}.$$
\end{proof}

\begin{lemma}\label{derivedlemma}
For any object $K$ in $D_c^b(\eta_0,\overline {\mathbb Q}_\ell)$,
there exists an object $\overline K$ in $D_c^b(\mathbb
A_k^1-\{0\},\overline {\mathbb Q}_\ell)$ such that $\overline
K|_{\eta_0}\cong K$ and such that $\mathscr H^q(\overline K)$ are
lisse on $\mathbb A_k^1-\{0\}$ and tamely ramified at $\infty$ for
all $q$.
\end{lemma}

\begin{proof} By the description of the derived category of $\ell$-adic
sheaves in \cite[1.1]{DWeil}, there exists a finite extension $E$ of
$\mathbb Q_\ell$ such that $K$ is given by an object in
$D_c^b(\eta_0,R)$, where $R$ is the integer ring of $E$. Let
$\lambda$ be a uniformizer of $R$. The object $K$ corresponds to a
projective system defined by complexes $K_n\in\mathrm{ob} \,
D_{ctf}(\eta_0, R/(\lambda^n))$ and isomorphisms
$$K_{n+1}\otimes^L_{R/(\lambda^{n+1})}R/(\lambda^n)\cong K_n$$
in $D_{ctf}(\eta_0,R/(\lambda^n))$. By \cite[Rapport 4.6]{DSGA}, we
can represent each $K_n$ by a bounded complex so that all components
$K_n^i$ $(i\in\mathbb Z)$ are constructible flat sheaves of
$R/(\lambda^n)$-modules on $\eta_0$. As $\eta_0$ is the spectrum of
a field, a flat constructible sheaf of $R/(\lambda^n)$-modules is
just a free $R/(\lambda^n)$-module of finite rank with a continuous
$\mathrm{Gal}(\bar\eta_0/\eta_0)$-action. Denote $\mathbb
A_k^1-\{0\}$ by $\mathbb G_{m,k}$, and let $\pi_1(\mathbb G_{m,k},
\bar\eta_0)^{\mathrm{tame},\infty}$ be the quotient of
$\pi_1(\mathbb G_{m,k}, \bar\eta_0)$ classifying finite etale
coverings of $\mathbb G_{m,k}$ tamely ramified at $\infty$. By
\cite[2.2.2.2]{L}, the composite of the canonical homomorphisms
$$\mathrm{Gal}(\bar\eta_0/\eta_0)=\pi_1(\eta_0,\bar \eta_0)
\to \pi_1(\mathbb G_{m,k},\bar\eta_0)\twoheadrightarrow
\pi_1(\mathbb G_{m,k},\bar\eta_0)^{\mathrm{tame},\infty}$$ has a
retraction
$$r:\pi_1(\mathbb
G_{m,k},\bar\eta_0)^{\mathrm{tame},\infty}\to
\mathrm{Gal}(\bar\eta_0/\eta_0).$$ Through the composite
$$\pi_1(\mathbb G_{m,k},\bar\eta_0)\twoheadrightarrow
\pi_1(\mathbb
G_{m,k},\bar\eta_0)^{\mathrm{tame},\infty}\stackrel{r}\to
\mathrm{Gal}(\bar\eta_0/\eta_0),$$ each free $R/(\lambda^n)$-module
of finite rank with a continuous
$\mathrm{Gal}(\bar\eta_0/\eta_0)$-action can be endowed with a
continuous $\pi_1(\mathbb G_{m,k},\bar\eta_0)$-action extending the
given action of $\mathrm{Gal}(\bar\eta_0/\eta_0)$ at $0$ and tamely
ramified at $\infty$. In particular, each component $K_n^i$ of the
complex $K_n$ now becomes  a free $R/(\lambda^n)$-module of finite
rank with continuous $\pi_1(\mathbb G_{m,k},\bar\eta_0)$-action
extending the given action of $\mathrm{Gal}(\bar\eta_0/\eta_0)$ at
$0$ and tamely ramified at $\infty$. We thus get a lisse
constructible flat sheaf $\overline K_n^i$ of
$R/(\lambda^n)$-modules on $\mathbb G_{m,k}$ tamely ramified at
$\infty$ with the property $\overline K^i_n|_{\eta_0}=K_n^i$. The
differential morphisms $d^i_{K_n}:K_n^i\to K_n^{i+1}$ for the
complex $K_n$ are linear maps compatible with the
$\mathrm{Gal}(\bar\eta_0/\eta_0)$-action, and induce linear maps
compatible with the $\pi_1(\mathbb G_{m,k}.\bar\eta_0)$-action. So
they define morphisms of sheaves $d^i_{\overline K_n}:\overline
K_n^i\to \overline K_n^{i+1}$ with the property $d\circ d=0$. We
thus get a complex $\overline K_n\in\mathrm{ob}\,D_{ctf}(\mathbb
G_{m,k},R/(\lambda^n))$ with the property $\overline
K_n|_{\eta_0}\cong K_n$. Finally let's prove the isomorphisms
$$K_{n+1}\otimes^L_{R/(\lambda^{n+1})}R/(\lambda^n)\cong K_n$$
in $D_{ctf}(\eta_0,R/(\lambda^n))$ can be extended to isomorphisms
$$\overline K_{n+1}\otimes^L_{R/(\lambda^{n+1})}R/(\lambda^n)\cong \overline K_n$$
in $D_{ctf}(\mathbb G_{m,k},R/(\lambda^n))$. The projective system
$(\overline K_n)\in\mathrm{ob}\, D_c^b(\mathbb G_{m,k},R)$ then
defines an object $\overline K$ in $\mathrm{ob}\, D_c^b(\mathbb
G_{m,k},\overline{\mathbb Q}_\ell)$ with the required property.
Since components of $K_{n+1}$ are free $R/(\lambda^{n+1})$-modules
of finite rank, we have
$$ K_{n+1}\otimes^L_{R/(\lambda^{n+1})}R/(\lambda^n)\cong
K_{n+1}\otimes_{R/(\lambda^{n+1})}R/(\lambda^n).$$ The isomorphism
$K_{n+1}\otimes^L_{R/(\lambda^{n+1})}R/(\lambda^n)\cong K_n$ in
$D_{ctf}(\eta_0,R/(\lambda^n))$ can be represented by a diagram
$$\begin{array}{rlc}
L&&\\
\downarrow &\searrow&\\
K_{n+1}\otimes_{R/(\lambda^{n+1})}R/(\lambda^n)&& K_n,
\end{array}$$ where the two arrows are quasi-isomorphisms. By \cite[Rapport 4.7]{DSGA},
we may assume $L$ is a bounded above complex of free
$R/(\lambda^n)$-modules of finite rank with continuous
$\mathrm{Gal}(\bar\eta_0/\eta_0)$-action. Again through the
composite
$$\pi_1(\mathbb G_{m,k},\bar\eta_0)\twoheadrightarrow
\pi_1(\mathbb
G_{m,k},\bar\eta_0)^{\mathrm{tame},\infty}\stackrel{r}\to
\mathrm{Gal}(\bar\eta_0/\eta_0),$$ we can extend $L$ to a complex
$\overline L$ of lisse constructible flat sheaves of
$R/(\lambda^n)$-modules on $\mathbb G_{m,k}$ and extend the above
diagram to a diagram
$$\begin{array}{rlc}
\overline L&&\\
\downarrow &\searrow&\\
\overline K_{n+1}\otimes_{R/(\lambda^{n+1})}R/(\lambda^n)&&
\overline K_n \end{array}$$ of complexes of lisse sheaves of
$\mathbb G_{m,k}$ so that the two arrows are quasi-isomorphisms.
This defines the isomorphism
$$\overline K_{n+1}\otimes_{R/(\lambda^{n+1})}R/(\lambda^n)\cong \overline
K_n$$ in $D_{ctf}(\mathbb G_{m,k},R/(\lambda^n))$ that we are
seeking.
\end{proof}

\begin{lemma}\label{smalllemma} Let $L_1,L_2\in\mathrm{ob}\,D_c^b(\mathbb
A_k^1,\overline{\mathbb Q}_\ell)$. If $\mathscr F
(L_1)|_{\eta_{\infty'}}\cong \mathscr F (L_2)|_{\eta_{\infty'}}$,
then $R\Phi_{\eta_0}(L_1)\cong R\Phi_{\eta_0}(L_2)$.
\end{lemma}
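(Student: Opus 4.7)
The plan is to exhibit $R\Phi_{\eta_0}(L)$ as a canonical functor of $\mathscr F(L)|_{\eta_{\infty'}}$, thereby reducing the lemma to a tautology. The key identity is the Fourier-dual of the formula
\[
\mathscr F(\iota_{!}L)[-1]|_{\eta_{\infty'}}\cong \mathscr F^{(0,\infty')}(L|_{\eta_0})
\]
used in the proof of Theorem \ref{mainthm}: I would establish, up to shift and Tate twist, a canonical isomorphism
\[
\bigl(\mathscr F(L)|_{\eta_{\infty'}}\bigr)^{<1}\cong \mathscr F^{(0,\infty')}\bigl(R\Phi_{\eta_0}(L)\bigr)
\]
for an arbitrary $L\in\mathrm{ob}\,D^b_c(\mathbb A^1_k,\overline{\mathbb Q}_\ell)$, where $(-)^{<1}$ denotes the slope $<1$ part of the $\mathrm{Gal}(\bar\eta_{\infty'}/\eta_{\infty'})$-representation.

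Granting this identity, the local inversion formula $\mathscr F^{(\infty',0)}\circ\mathscr F^{(0,\infty')}\cong\mathrm{id}$ of \cite[2.4.3 (i) c)]{L} then yields (up to shift and twist)
\[
R\Phi_{\eta_0}(L)\cong \mathscr F^{(\infty',0)}\bigl((\mathscr F(L)|_{\eta_{\infty'}})^{<1}\bigr),
\]
which exhibits $R\Phi_{\eta_0}(L)$ as a canonical functor of $\mathscr F(L)|_{\eta_{\infty'}}$. The hypothesis $\mathscr F(L_1)|_{\eta_{\infty'}}\cong \mathscr F(L_2)|_{\eta_{\infty'}}$ then gives the desired conclusion $R\Phi_{\eta_0}(L_1)\cong R\Phi_{\eta_0}(L_2)$ at once.

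To derive the key identity I would apply Laumon's stationary phase principle (\cite[2.3.3.1 (iii)]{L}), which decomposes $\mathscr F(L)|_{\eta_{\infty'}}$ canonically into contributions $\mathscr F^{(x,\infty')}$ from the finite singular points $x$ of $L$ and $\mathscr F^{(\infty,\infty')}$ from $\infty$. By the slope rules tabulated in \cite[2.4.3]{L}, only the $x=0$ summand can have slope strictly less than $1$ at $\eta_{\infty'}$, the contributions from $x\ne 0$ (twists by $\mathscr L_\psi(xt')$) living in slope exactly $1$ and those from $\infty$ in slope $>1$. This isolates the $x=0$ contribution as the slope $<1$ part. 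The main obstacle is identifying this contribution with $\mathscr F^{(0,\infty')}(R\Phi_{\eta_0}(L))$ for a general $L$, as opposed to the $\iota_{!}$-extension of a lisse sheaf: one reduces to the latter case via the triangle $\iota_{!}\iota^{\ast}L\to L\to i_{\ast}i^{\ast}L\to$ (with $i:\{0\}\hookrightarrow\mathbb A^1_k$), observing that $\mathscr F(i_{\ast}i^{\ast}L)$ is a constant sheaf on $\mathbb A^1_k$ and so its influence on both sides is absorbed into the passage $L|_{\eta_0}\rightsquigarrow R\Phi_{\eta_0}(L)$. Once this bookkeeping of stalks at $0$ and slopes at $\infty'$ is done within Laumon's framework, the lemma follows.
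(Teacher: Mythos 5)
Your argument is correct in outline, but it takes a genuinely different route from the paper. The paper's proof is a two-step reduction: by the global inversion formula \cite[1.2.2.1]{L} one has $\mathscr F'(\mathscr F(L))=b_\ast L(-1)$, so $R\Phi_{\eta_0}(L)$ is recovered (up to $b^\ast$ and a Tate twist) from $R\Phi_{\eta_0}(\mathscr F'(L'))$ with $L'=\mathscr F(L)$; and by \cite[2.3.2.1]{L} the vanishing cycles at a finite point of a Fourier transform depend only on the restriction of the input to $\eta_{\infty'}$. In other words, the paper applies the stationary phase principle to $L'=\mathscr F(L)$ rather than to $L$, and never needs to isolate the contribution of $0$ inside $\mathscr F(L)|_{\eta_{\infty'}}$. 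You instead apply stationary phase to $L$ itself, use the slope estimates ($\mathscr F^{(0,\infty')}$ lands in slopes $<1$, $\mathscr F^{(x,\infty')}$ for $x\neq 0$ in slope exactly $1$, $\mathscr F^{(\infty,\infty')}$ in slopes $>1$) to carve out the $x=0$ contribution as the break-$<1$ part, and then invert locally via \cite[2.4.3 (i) c)]{L}. This buys you more: an explicit functorial formula $R\Phi_{\eta_0}(L)\cong\mathscr F^{(\infty',0)}\bigl((\mathscr F(L)|_{\eta_{\infty'}})^{<1}\bigr)$ up to shift and twist, whereas the paper only gets the bare "depends only on" statement. The cost is the bookkeeping you defer to the end, which is not entirely trivial: \cite[2.3.3.1]{L} is stated for extensions by zero of complexes lisse on an open set, and identifying the $0$-contribution for a general $L$ with $\mathscr F^{(0,\infty')}(R\Phi_{\eta_0}(L))$ (vanishing rather than nearby cycles, and the absorption of the punctual part $i_\ast i^\ast L$, whose Fourier transform is constant and hence does land in the slope-$<1$ part) is precisely the content of the paper's Lemma \ref{techlemma}; you should either invoke that lemma or carry out the triangle comparison you sketch, checking compatibility of the boundary maps. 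With that done, your proof is complete.
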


\begin{proof}Let $L\in\mathrm{ob}\,D_c^b(\mathbb A_k^1,\overline{\mathbb
Q}_\ell)$ and let $L'=\mathscr F(L)$. By the inversion formula for
the Deligne-Fourier transformation \cite[1.2.2.1]{L}, we have
$$\mathscr F'(L')=b_\ast L(-1),$$ where $b:\mathbb
A_k^1\to\mathbb A_k^1$ is the morphism defined by $t\mapsto -t$, and
$\mathscr F'$ denote the Deligne-Fourier transformation for the dual
of $\mathbb A_k^1$ (which is the second factor of $\mathbb
A_k^1\times_k\mathbb A_k^1$). So we have
$$R\Phi_{\eta_0}(L)=b^\ast R\Phi_{\eta_0}(\mathscr F'(L'))(1).$$
By \cite[2.3.2.1 (i)]{L} (which relies on \cite[Th. finitude
2.16]{DSGA}), $R\Phi_{\eta_0}(\mathscr F'(L'))$ depends only on
$L'|_{\eta_{\infty'}}=\mathscr F(L)|_{\eta_{\infty'}}$. Our
assertion follows.
\end{proof}

\begin{remark} We expect that Theorem 1.2 still holds if we just assume $\mathscr H^q(K_i)|_{X_i-\{x_i\}}$
are lisse for all $q$. The difficulty is that we don't know whether
this weaker condition implies that $R\Phi_f(K)$ is supported at $x$.
Using Lemma 2.1 (i) and the canonical isomorphism (\ref{iso(0)}),
one can show that $R\Phi_{\eta_{\infty'}}\Big({\mathrm{pr}}_X^\ast
K\otimes^L(f\times \mathrm{id}_{{\mathbb P}_k^1})^\ast
\overline{\mathscr L_\psi(tt')}\Big)$ is supported at $(x,\infty')$
under this condition.
\end{remark}

\section{Proof of Lemma \ref{keylemma}}

We first prove Lemma \ref{keylemma} (i) and (iii).

\begin{proof}[Proof of Lemma \ref{keylemma} (i) and (iii)] That
$R\Phi_{\eta_{\infty'}}\Big(p_1^\ast j_!(R\Phi_{g}(K)_{y})\otimes^L
\overline{\mathscr L_\psi(\pi t')}\Big)$ is supported at
$(s,\overline \infty')$ follows from \cite[2.4.2.2]{L} and the fact
that the functor $R\Phi_{\eta_{\infty'}}\Big(p_1^\ast
j_!(-)\otimes^L \overline{\mathscr L_\psi(\pi t')}\Big)$ is exact.
Here $1/t'$ is a uniformizing parameter $\pi'$ on $\mathbb
P^1_{(\infty')}$, and in Laumon's notation, $\overline{\mathscr
L_\psi(\pi t')}$ is $\overline {\mathscr L}_\psi(\pi/\pi')$. By
\cite[1.3.1.2]{L}, $R\Phi_{\eta_{\infty'}}(\overline{{\mathscr
L}_\psi(\pi t')})$ vanishes on $S\times_k\overline\infty'$. On the
other hand, $g\times\mathrm{id}_{{\mathbb P}_{(\infty')}^1}$ is
smooth on $(Y-\{y\})\times_k\mathbb P_{(\infty')}^1$, and $\mathscr
H^q(K)|_{Y-\{y\}}$ are lisse for all $q$. By the smooth base change
theorem and the projection formula, we have
\begin{eqnarray*}
&&R\Phi_{\eta_{\infty'}}\Big({\mathrm{pr}}_1^\ast K\otimes^L(g\times
\mathrm{id}_{{\mathbb P}_{(\infty')}^1})^\ast \overline{{\mathscr
L}_\psi(\pi t')}\Big)|_{(Y-\{y\})\times_k\overline \infty'}\\
&\cong& ({\mathrm{pr}}_1^\ast K)|_{(Y-\{y\})\times_k\overline
\infty'} \otimes^L \Big((g\times
\mathrm{id}_{\overline\infty'})^\ast
R\Phi_{\eta_{\infty'}}(\overline{{\mathscr
L}_\psi(\pi t')})\Big)|_{(Y-\{y\})\times_k\overline \infty'}\\
&=&0.
\end{eqnarray*}
It follows that $R\Phi_{\eta_{\infty'}}\Big({\mathrm{pr}}_1^\ast
K\otimes^L (g\times \mathrm{id}_{{\mathbb P}_{(\infty')}^1})^\ast
\overline{\mathscr L_\psi(\pi t')}\Big)$ is supported at
$(y,\overline \infty')$. This proves Lemma \ref{keylemma} (i).

Note that the restriction of $\overline{{\mathscr L}_\psi(\pi t')}$
to $S\times_k \eta_{\infty'}$ is a lisse sheaf. So under the
assumption of (iii), the restriction of ${\mathrm{pr}}_1^\ast
K\otimes^L (g\times \mathrm{id}_{{\mathbb P}_{(\infty')}^1})^\ast
\overline{{\mathscr L}_\psi(\pi t')}[n]$ to
$Y\times_k\eta_{\infty'}$ is perverse. By \cite[4.5]{I},
$R\Psi_{\eta_{\infty'}}\Big({\mathrm{pr}}_1^\ast K\otimes^L (g\times
\mathrm{id}_{{\mathbb P}_{(\infty')}^1})^\ast \overline{{\mathscr
L}_\psi(\pi t')}\Big)[n]$ is perverse on
$Y\times_k\overline\infty'$. Note that ${\mathrm{pr}}_1^\ast
K\otimes^L(g\times \mathrm{id}_{{\mathbb P}_{(\infty')}^1})^\ast
\overline{{\mathscr L}_\psi(\pi t')}$ vanishes on
$Y\times_k\overline\infty'$. So we have
$$R\Phi_{\eta_{\infty'}}\Big({\mathrm{pr}}_1^\ast
K\otimes^L(g\times \mathrm{id}_{{\mathbb P}_{(\infty')}^1})^\ast
\overline{{\mathscr L}_\psi(\pi t')}\Big)\cong
R\Psi_{\eta_{\infty'}}\Big({\mathrm{pr}}_1^\ast K\otimes^L(g\times
\mathrm{id}_{{\mathbb P}_{(\infty')}^1})^\ast \overline{{\mathscr
L}_\psi(\pi t')}\Big).$$ Hence
$R\Phi_{\eta_{\infty'}}\Big({\mathrm{pr}}_1^\ast K\otimes^L(g\times
\mathrm{id}_{{\mathbb P}_{(\infty')}^1})^\ast \overline{{\mathscr
L}_\psi(\pi t')}\Big)[n]$ is a perverse sheaf on
$Y\times_k\overline\infty'$ supported at $(y,\overline\infty')$.
Lemma \ref{keylemma} (iii) then follows.
\end{proof}

\begin{lemma}\label{techlemma} Let $K$ be an object in $D_c^b(S,\overline
{\mathbb Q}_\ell)$, let
$$p_1:S\times_k \mathbb P^1_{(\infty')}\to S,
\quad p_2:S\times_k \mathbb P^1_{(\infty')}\to \mathbb
P^1_{(\infty')}$$ be the projections, and let $j:\eta\hookrightarrow
S$ be the canonical open immersion. We have a canonical isomorphism
$$R\Phi_{\eta_{\infty'}}\Big(p_1^\ast K\otimes^L \overline{\mathscr
L_\psi(\pi t')}\Big)\cong R\Phi_{\eta_{\infty'}}\Big(p_1^\ast
\big(j_!R\Phi_{\eta}(K)\big)\otimes^L \overline{\mathscr L_\psi(\pi
t')}\Big),$$ where $R\Phi_{\eta_{\infty'}}$ denotes the vanishing
cycles functor with respect to the projection $p_2$,
$R\Phi_{\eta}(K)$ denotes the vanishing cycles complex of $K$
relative to $\mathrm{id}_S$, which is a complex of
$\mathrm{Gal}(\bar\eta/\eta)$-modules and is regarded as an object
in $D_c^b(\eta,\overline {\mathbb Q}_\ell)$, and can be identified
with the cone of the specialization morphism $K_{\bar s}\to K_{\bar
\eta}$.
\end{lemma}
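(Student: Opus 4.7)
The plan is to apply the functor
\[
F(-) := R\Phi_{\eta_{\infty'}}\!\bigl(p_1^*(-)\otimes^L\overline{\mathscr L_\psi(\pi t')}\bigr)
\]
to the canonical distinguished triangle on $S$
\[
K \longrightarrow Rj_*j^*K \longrightarrow i_*R\Phi_\eta(K) \longrightarrow K[1],
\]
which encodes the defining triangle $i^*K\to R\Psi_\eta(K)\to R\Phi_\eta(K)\to$ of the vanishing cycle functor via the identity $R\Psi_\eta K=i^*Rj_*j^*K$ (where $i\colon s\hookrightarrow S$ is the closed immersion). Combined with the analogous triangle $j_!V\to Rj_*V\to i_*i^*Rj_*V\to j_!V[1]$ applied to $V=R\Phi_\eta(K)$, the lemma reduces to two tasks: $(a)$ the vanishing $F(Rj_*N)=0$ for any $N\in D^b_c(\eta,\overline{\mathbb Q}_\ell)$; and $(b)$ matching the two resulting ``local'' terms $F(i_*R\Phi_\eta(K))$ and $F(i_*i^*Rj_*R\Phi_\eta(K))$ at $(s,\overline\infty')$.

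For $(a)$, I apply smooth base change to obtain $p_1^*Rj_*N\cong R(j_S\times\mathrm{id})_*(p_1^*N)$. Writing the extended Artin--Schreier kernel as $\overline{\mathscr L_\psi(\pi t')}=j'_!\mathscr L_\psi(\pi t')$, where $j'\colon S\times_k\mathbb A^1_k\hookrightarrow S\times_k\mathbb P^1_{(\infty')}$ is the open immersion, the projection formula for $j'_!$ together with the commutation of $R(j_S\times\mathrm{id})_*$ past $j'_!$ expresses the tensor product $p_1^*(Rj_*N)\otimes^L\overline{\mathscr L_\psi(\pi t')}$ as a $R(j_S\times\mathrm{id})_*$-pushforward of a sheaf on $\eta\times_k\mathbb P^1_{(\infty')}$. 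Laumon's vanishing result \cite{L} (already cited in the proof of parts (i) and (iii)), asserting that $R\Phi_{\eta_{\infty'}}\!\bigl(\overline{\mathscr L_\psi(\pi t')}\bigr)$ vanishes on $S\times_k\overline\infty'$, together with smooth base change for vanishing cycles, then yields $F(Rj_*N)=0$. For $(b)$, the key observation is that $\overline{\mathscr L_\psi(\pi t')}$ trivializes on $\{s\}\times_k\mathbb A^1_k$ (because $\pi$ vanishes at $s$), so that the computation of $F(i_*W)$ for a complex $W$ supported at $s$ reduces to the vanishing cycles of $j'_!\overline{\mathbb Q}_\ell$ on $\mathbb P^1_{(\infty')}$ tensored with $W$, functorially in $W$; the natural transformations in the defining triangle of $R\Phi_\eta$ then yield the required identification of the two local terms.

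The main obstacle will be step $(a)$: since $\overline{\mathscr L_\psi(\pi t')}$ is not lisse on the full product $S\times_k\mathbb P^1_{(\infty')}$ but only on its open part $S\times_k\mathbb A^1_k$, the projection formula for $Rj_*$ does not apply directly, and one must work carefully through the $j'_!$-factorization and a chain of open-immersion commutations with $R(j_S\times\mathrm{id})_*$ before Laumon's vanishing can be invoked. A secondary difficulty is verifying in step $(b)$ that the natural map $R\Phi_\eta(K)\to i^*Rj_*R\Phi_\eta(K)$ induces an isomorphism after $F\circ i_*$; this amounts to showing that the inertia-cohomology discrepancy is annihilated by the Artin--Schreier twist at $(s,\overline\infty')$, which is the content of the triviality of $\mathscr L_\psi$ over the residue field.
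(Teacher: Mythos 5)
Your decomposition is not the paper's, and it does not work. The cone of the map $K\to Rj_*j^*K$ is $i_*i^!K[1]$, whose stalk at $\bar s$ is the cone of $K_{\bar s}\to R\Gamma(I,K_{\bar\eta})$ (inertia cohomology), not the cone of $K_{\bar s}\to K_{\bar\eta}$, so your first triangle does not produce $R\Phi_\eta(K)$. More seriously, even granting some identification of the third term, your triangle places the vanishing cycles at the \emph{closed} point via $i_*$, whereas the lemma requires $j_!R\Phi_\eta(K)$: the complex $R\Phi_\eta(K)$ with its full $\mathrm{Gal}(\bar\eta/\eta)$-action, viewed as a sheaf on the generic point $\eta$ and extended by zero. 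That inertia action is the entire content of the statement (it is what the local Fourier transform $\mathscr F^{(0,\infty')}$ sees), and your step $(b)$, which reduces $F(i_*W)$ to $W$ tensored with the vanishing cycles of $j'_!\overline{\mathbb Q}_\ell$ at $\infty'$, would output an essentially unramified object at $\eta_{\infty'}$ --- visibly not what the lemma asserts.

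Step $(a)$ is also false as stated: $F(Rj_*N)\neq 0$ in general. For $N$ a nontrivial tame character of $\mathrm{Gal}(\bar\eta/\eta)$ one has $Rj_*N=j_!N$ plus an unramified correction, and $F(j_!N)_{(s,\infty')}$ computes $\mathscr F^{(0,\infty')}(N)$, which is nonzero (the local Fourier transform is an equivalence on the relevant categories). Laumon's vanishing result [L, 1.3.1.2] applies to complexes that are \emph{constant} (unramified) along $S$, not to $Rj_*$ of an arbitrary ramified $N$; your chain of base-change and projection-formula manipulations cannot manufacture the missing hypothesis. The correct decomposition, which is the one in the paper, is the distinguished triangle $K'\to K\to j_!R\Phi_\eta(K)\to{}$, where $K'$ is the constant complex on $S$ with value $K_{\bar s}$. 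This triangle is constructed by describing $D(S)$ via gluing triples $(F,G,F\to G^I)$ with $F$ on $s$ and $G$ a $\mathrm{Gal}(\bar\eta/\eta)$-complex: one sets $K'=(K_{\bar s},K_{\bar s},\mathrm{id})$ and checks that the cone of $K'\to K=(K_{\bar s},K_{\bar\eta},K_{\bar s}\to K_{\bar\eta}^I)$ has vanishing $F$-component, hence is $j_!$ of its generic part, which is exactly $R\Phi_\eta(K)$. The vanishing from [L, 1.3.1.2] is then applied to $F(K')$, where it is actually available. You would need to rebuild your argument around this constant-part decomposition rather than the $Rj_*$/closed-point one.
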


\begin{proof} Let $E$ be a finite extension of $\mathbb Q_\ell$
in $\overline{\mathbb Q}_\ell$ containing $\mathrm{im}(\psi)$, let
$R$ be the integral closure of $\mathbb Z_\ell$ in $E$, and let
$\lambda$ be a uniformizer of $R$. It suffices to prove the same
assertion for any complex $K$ of sheaves of $R/(\lambda^m)$-modules
for any positive integer $m$. For convenience, write $\Lambda$ for
$R/(\lambda^m)$. For any scheme $X$, denote by $C(X, \Lambda)$ the
triangulated category of complexes of etale sheaves of
$\Lambda$-modules on $X$ with morphisms being homotopy classes of
morphisms of complexes. Let $\tilde S$ be the strict henselization
of $S$ at a geometric point $\bar s$ over $s$. Fix notation by the
following commutative diagram:
$$\begin{array}{ccccc}
\bar \eta&\stackrel{\bar j}\rightarrow & \tilde S
&\stackrel{\bar i}\leftarrow &\bar s\\
\downarrow &&\downarrow && \downarrow \\
 \eta&\stackrel{j}\rightarrow & S&\stackrel{i}\leftarrow &s.
\end{array}$$
For any complex $K\in \mathrm{ob}\, C(S, \Lambda)$, we have
\begin{eqnarray*}
R\Gamma(\bar s, R\Psi_{\eta}(K))&=&R\Gamma(\bar s,\bar i^\ast R\bar
j_\ast
\bar j^\ast (K|_{\tilde S}))\\
&\cong&\Gamma(\bar s, \bar i^\ast \bar j_\ast \bar j^\ast
(K|_{\tilde S}))\\
&\cong& K_{\bar \eta}.
\end{eqnarray*}
So $R\Psi_{\eta}(K)$ corresponds to the complex $K_{\bar\eta}$ of
$\Lambda$-modules with $\mathrm{Gal}(\bar\eta/\eta)$-action. Recall
that $R\Phi_{\eta}(K)$ is the mapping cone of the canonical morphism
$\bar i^\ast (K|_{\tilde S})\to R\Psi_{\eta}(K)$.

Let $I$ be the inertia subgroup of $\mathrm{Gal}(\bar\eta/\eta)$,
that is, the kernel of the canonical epimorphism
$\mathrm{Gal}(\bar\eta/\eta)\to \mathrm{Gal}(\bar s/s)$. The functor
$$K\mapsto (i^\ast K,
j^\ast K, i^\ast K\to i^\ast j_\ast j^\ast K)$$ defines an
equivalence of categories from the category $C(S,\Lambda)$ to the
category of triples
$$(F, G, F\to G^I),$$ where $F$ is
a complex of $\Lambda$-modules with $\mathrm{Gal}(\bar s/s)$-action
which can be identified with  an object in $C(s,\Lambda)$, $G$ is a
complex of $\Lambda$-modules with
$\mathrm{Gal}(\bar\eta/\eta)$-action which can be identified with an
object in $C(\eta,\Lambda)$, and $F\to G^I$ is an equivariant
morphism complexes. Given an object $K$ in $C(S,\Lambda)$, consider
the triples
$$K'=(K_{\bar s}, K_{\bar s}, K_{\bar s}\stackrel{\mathrm{id}}\to
K_{\bar s}),\quad K=(K_{\bar s}, K_{\bar\eta}, K_{\bar s}\to
K_{\bar\eta}^I).$$ Note that the second object is exactly the triple
associated to $K$ and hence is denoted by $K$. The first object
corresponds to a complex of constant sheaves on $S$. We have a
canonical morphism $K'\to K$. Let $K''$ be the mapping cone of
$K'\to K$. Then the canonical morphism $j_!j^\ast K''\to K''$
defines a quasi-isomorphism $j_!R\Phi_{\eta}(K)\to K''$. We thus
have a distinguished triangle
$$K'\to K\to j_!R\Phi_{\eta}(K)\to$$ in the derived category
$D(S, \Lambda)$. It gives rise to a distinguished triangle
$$R\Psi_{\eta_{\infty'}}\Big(p_1^\ast K'\otimes^L \overline{\mathscr
L_\psi(\pi t')}\Big)\to R\Psi_{\eta_{\infty'}}\Big(p_1^\ast
K\otimes^L \overline{\mathscr L_\psi(\pi t')}\Big)\to
R\Psi_{\eta_{\infty'}}\Big(p_1^\ast
\big(j_!R\Phi_{\eta}(K)\big)\otimes^L \overline{\mathscr L_\psi(\pi
t')}\Big)\to,$$ which can be identified with a distinguished
triangle
$$R\Phi_{\eta_{\infty'}}\Big(p_1^\ast K'\otimes^L \overline{\mathscr
L_\psi(\pi t')}\Big)\to R\Phi_{\eta_{\infty'}}\Big(p_1^\ast
K\otimes^L \overline{\mathscr L_\psi(\pi t')}\Big)\to
R\Phi_{\eta_{\infty'}}\Big(p_1^\ast
\big(j_!R\Phi_{\eta}(K)\big)\otimes^L \overline{\mathscr L_\psi(\pi
t')}\Big)\to$$ because $\overline{\mathscr L_\psi(\pi t')}$ vanishes
on $S\times_k\overline \infty'$. Since $K'$ corresponds to a complex
of constant sheaves on $S$, we have
$$R\Phi_{\eta_{\infty'}}\Big(p_1^\ast K'\otimes^L \overline{\mathscr
L_\psi(\pi t')}\Big)=0$$ by \cite[1.3.1.2]{L}. (Note that
\cite[1.3.1.2]{L} holds also for the torsion coefficients $\Lambda$
as its proof in \cite[Appendice 2.4]{KL} shows.) We thus have
$$R\Phi_{\eta_{\infty'}}\Big(p_1^\ast K\otimes^L
\overline{\mathscr L_\psi(\pi t')}\Big)\cong
R\Phi_{\eta_{\infty'}}\Big(p_1^\ast
\big(j_!R\Phi_{\eta}(K)\big)\otimes^L \overline{\mathscr L_\psi(\pi
t')}\Big).$$
\end{proof}

\begin{lemma}\label{canonical} Let $f:
X\to S$ a morphism, $x$ a $k$-rational point in the special fiber
$f^{-1}(s)$, $X_{(x)}$ the henselization of $X$ at $x$,
$f_{(x)}:X_{(x)}\to S$ the morphism induced by $f$, and
$K\in\mathrm{ob}\, D_c^b(X,\overline{\mathbb Q}_\ell)$. Then we have
a canonical isomorphism
$$R\Phi(Rf_{(x)\ast} (K|_{X_{(x)}}))\cong (R\Phi(K))_{x}.$$
\end{lemma}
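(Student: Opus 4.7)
The plan is to match both sides via the standard formula expressing the stalk of nearby cycles as cohomology of a ``Milnor tube'', crucially exploiting that $x$ is $k$-rational so that $X_{(x)}$ has residue field $k$ with a single closed point. Writing $L=K|_{X_{(x)}}$ and $M=Rf_{(x)\ast}L\in D_c^b(S,\overline{\mathbb Q}_\ell)$, each of $(R\Phi K)_{x}$ and $R\Phi(M)$ is by definition a mapping cone of a canonical specialization map, so the proof reduces to matching the two sources, the two targets, and the two morphisms.

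Fix a geometric point $\bar s$ over $s$ and let $\bar x$ be the unique geometric point of $X$ above $x$ mapping to $\bar s$; let $\tilde S$ be the strict henselization of $S$ at $\bar s$. Because $x$ is $k$-rational, the strict henselization of $X$ at $\bar x$ is $X_{(\bar x)}=X_{(x)}\times_S\tilde S$, so
$$X_{(\bar x)}\times_{\tilde S}\bar\eta\;=\;X_{(x)}\times_S\bar\eta.$$
By the standard stalk formula for nearby cycles,
$$(R\Psi K)_{\bar x}\;\cong\;R\Gamma(X_{(x)}\times_S\bar\eta,K),\qquad (R\Phi K)_{\bar x}\;\cong\;\mathrm{Cone}\bigl(K_{\bar x}\to R\Gamma(X_{(x)}\times_S\bar\eta,K)\bigr).$$
On the other hand $R\Phi(M)$ is the cone of $M_{\bar s}\to (R\Psi M)_{\bar s}=M_{\bar\eta}$, and the \'etale stalk formula for higher direct images (which requires no properness) gives
$$M_{\bar s}=R\Gamma(X_{(\bar x)},L)=L_{\bar x}=K_{\bar x},\qquad M_{\bar\eta}=R\Gamma(X_{(x)}\times_S\bar\eta,L)=R\Gamma(X_{(x)}\times_S\bar\eta,K),$$
where $R\Gamma(X_{(\bar x)},L)=L_{\bar x}$ uses that $X_{(\bar x)}$ is strictly local with closed point $\bar x$. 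Thus the two sources and two targets agree.

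It remains to verify that the two specialization maps $K_{\bar x}\to R\Gamma(X_{(x)}\times_S\bar\eta,K)$ coincide: both are induced by the adjunction unit $\mathrm{id}\to R\bar j_\ast\bar j^\ast$, one applied to $M$ on $\tilde S$ and the other to $K$ on $X_{(\bar x)}$, and their compatibility is an instance of pushforward--pullback commutation along the square cut out by $X_{(\bar x)}\to\tilde S$. The main obstacle is purely bookkeeping, namely keeping the strict-henselization identifications straight; once that is done, the canonical isomorphism of mapping cones follows from the naturality of the cone construction.
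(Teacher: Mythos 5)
Your proposal is correct and follows essentially the same route as the paper: both sides are identified as the mapping cone of the specialization map $R\Gamma(X_{(\bar x)},K)\to R\Gamma(X_{(\bar x)}\times_{\tilde S}\bar\eta,K)$, using that $x$ is $k$-rational to write $X_{(\bar x)}\cong X_{(x)}\times_S\tilde S$ and the (non-proper) stalk formula for $Rf_{(x)\ast}$. Your explicit remarks that $R\Gamma(X_{(\bar x)},L)=L_{\bar x}$ and that the two adjunction units are compatible are points the paper leaves implicit, but the argument is the same.
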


\begin{proof} Let $X_{(\bar x)}$ (resp. $\tilde S$)
be the strict henselization of $X$ (resp. $S$) at a geometric point
$\bar x$ (resp. $\bar s$) over $x$ (resp. $s$). Since $x$ is a
$k$-rational point, we have $X_{(\bar x)}\cong X_{(x)}\times_S
\tilde S$. Let $f_{(\bar x)}:X_{(\bar x)}\to \tilde S$ be the
morphism induced by $f$. It can be identified with the base change
of $f_{(x)}:X_{(x)}\to S$. Fix notation by the following commutative
diagram:
$$\begin{array}{rclcl}
X_{(\bar x)}\times_{\tilde S}\bar\eta &\rightarrow &X_{(\bar
x)}&\leftarrow &X_{(\bar x)}\times_{\tilde S}\bar s \\
{\scriptstyle f_{(\bar x),\bar
\eta}}\downarrow&&\downarrow{\scriptstyle f_{(\bar x)}}
&&  \downarrow{\scriptstyle f_{(\bar x),\bar s}}\\
\bar\eta &\stackrel{\bar j}\rightarrow &\tilde S&\stackrel{\bar
i}\leftarrow &\bar s.
\end{array}
$$
For convenience, denote the restrictions of $K$ to $X_{(x)}$,
$X_{(\bar x)}$ and $X_{(\bar x)}\times_{\tilde S}\bar\eta$ also by
$K$. Then $R\Phi(Rf_{(x)\ast} K)$ is the mapping cone of the
canonical morphism
$$\bar i^\ast Rf_{(\bar x)\ast}K\to \bar i^\ast R\bar j_\ast
\bar j^\ast Rf_{(\bar
x)\ast}K.$$ We have
\begin{eqnarray*}
R\Gamma(\bar s,\bar i^\ast Rf_{(\bar x)\ast}K)&\cong& R\Gamma(\tilde
S,Rf_{(\bar x)\ast}K)\\
&\cong& R\Gamma(X_{(\bar x)},K),\\
R\Gamma(\bar s,\bar i^\ast R\bar j_\ast \bar j^\ast Rf_{(\bar
x)\ast}K) &\cong&
R\Gamma(\tilde S,R\bar j_\ast \bar j^\ast Rf_{(\bar x)\ast}K)\\
&\cong& R\Gamma(\bar \eta,\bar j^\ast Rf_{(\bar x)\ast}K)\\
&\cong& R\Gamma(\bar \eta, Rf_{(\bar x),\bar\eta \ast}K)\\
&\cong&R\Gamma(X_{(\bar x)}\times_{\tilde S}\bar\eta, K).
\end{eqnarray*}
It follows that $R\Phi(Rf_{(x)\ast} K)$ can be identified with the
mapping cone of the canonical morphism
$$R\Gamma(X_{(\bar x)},K)\to R\Gamma(X_{(\bar x)}\times_{\tilde S}\bar\eta, K).$$
The later is exactly $(R\Phi(K))_{x}$.
\end{proof}

Under the assumption of Lemma \ref{keylemma}, let $Y_{(y)}$ be the
henselization of $Y$ at $y$, and let $g_{(y)}:Y_{(y)}\to S$ be the
morphism induced by $g$. Fix notation by the following commutative
diagram:
$$\begin{array}{ccccc}
Y_{(y)}\times_k \mathbb
P_{(\infty')}^1&\stackrel{g_{(y)}\times\mathrm{id}_{\mathbb
P_{(\infty')}^1}}\to&S\times_k\mathbb P_{(\infty')}^1&\stackrel
{p_2}\to&
\mathbb P_{(\infty')}^1\\
{\scriptstyle {\mathrm{pr}}_{1(y)}}\downarrow&&{\scriptstyle p_1}\downarrow&&\downarrow\\
Y_{(y)}&\stackrel{g_{(y)}}\to&S&\to& \mathrm{Spec}\,k.
\end{array}
$$ Denote the restriction of $K$ to $Y_{(y)}$ also by $K$.
We have canonical morphisms
\begin{eqnarray*}
&&R\Psi_{\eta_{\infty'}}\Big(p_1^\ast Rg_{(y)\ast}K
 \otimes^L \overline{\mathscr L_\psi(\pi t')}\Big)\\
&\cong&
R\Psi_{\eta_{\infty'}}\Big(R(g_{(y)}\times\mathrm{id}_{\mathbb
P^1_{(\infty')}})_\ast {\mathrm{pr}}_{1(y)}^\ast K\otimes^L
\overline{\mathscr L_\psi(\pi t')}\Big)\quad(\hbox{smooth base change theorem})\\
&\cong&
R\Psi_{\eta_{\infty'}}\Big(R(g_{(y)}\times\mathrm{id}_{\mathbb
P^1_{(\infty')}})_\ast\big({\mathrm{pr}}_{1(y)}^\ast K\otimes^L
(g_{(y)}\times\mathrm{id}_{\mathbb P^1_{(\infty')}})^\ast
\overline{\mathscr L_\psi(\pi t')}\big)\Big)\quad(\hbox{projection
formula})\\
&\to & R(g_{(y)}\times\mathrm{id}_{\overline \infty'})_\ast
R\Psi_{\eta_{\infty'}}\Big({\mathrm{pr}}_{1(y)}^\ast
K\otimes^L(g_{(y)}\times \mathrm{id}_{{\mathbb
P}_{(\infty')}^1})^\ast \overline{\mathscr L_\psi(\pi t')}\Big)
\end{eqnarray*}
Since $p_1^\ast Rg_{(y)\ast}K \otimes^L \overline{\mathscr
L_\psi(\pi t')}$ and ${\mathrm{pr}}_{1(y)}^\ast K\otimes^L
(g_{(y)}\times \mathrm{id}_{{\mathbb P}_{(\infty')}^1})^\ast
\overline{\mathscr L_\psi(\pi t')}$ vanish on the fibers over
$\infty'$, their vanishing cycles complex and nearby cycles complex
coincide. So the composite of the above canonical morphisms can be
identified with a canonical morphism
$$R\Phi_{\eta_{\infty'}}\Big(p_1^\ast Rg_{(y)\ast}K
\otimes^L \overline{\mathscr L_\psi(\pi t')}\Big)\to
R(g_{(y)}\times\mathrm{id}_{\overline\infty'})_\ast
R\Phi_{\eta_{\infty'}}\Big({\mathrm{pr}}_{1(y)}^\ast K\otimes^L
(g_{(y)}\times \mathrm{id}_{{\mathbb P}_{(\infty')}^1})^\ast
\overline{\mathscr L_\psi(\pi t')}\Big).$$ Applying Lemma
\ref{techlemma} to the complex $Rg_{(y)\ast}K$ on $S$, we get a
canonical isomorphism
$$R\Phi_{\eta_{\infty'}}\Big(p_1^\ast Rg_{(y)\ast}K
\otimes^L \overline{\mathscr L_\psi(\pi t')}\Big)\cong
R\Phi_{\eta_{\infty'}}\Big(p_1^\ast
\big(j_!R\Phi(Rg_{(y)\ast}K)\big)\otimes^L \overline{\mathscr
L_\psi(\pi t')}\Big).$$ By Lemma \ref{canonical}, we have
$$R\Phi(Rg_{(y)\ast}K)\cong (R\Phi_g(K))_{y}.$$ We thus get
a canonical morphism
$$R\Phi_{\eta_{\infty'}}\Big(p_1^\ast
\big(j_!(R\Phi_g(K))_{y}\big)\otimes^L \overline{\mathscr L_\psi(\pi
t')}\Big) \to R(g_{(y)}\times\mathrm{id}_{\overline\infty'})_\ast
R\Phi_{\eta_{\infty'}}\Big({\mathrm{pr}}_{1(y)}^\ast
K\otimes^L(g_{(y)}\times \mathrm{id}_{{\mathbb
P}_{(\infty')}^1})^\ast \overline{\mathscr L_\psi(\pi t')}\Big).$$
By Lemma \ref{keylemma} (i) that we have shown at the beginning of
this section, this gives rise to a canonical morphism
\begin{eqnarray}\label{iso(2)}
\begin{split}& R\Phi_{\eta_{\infty'}}\Big(p_1^\ast
\big(j_!(R\Phi_g(K))_{y}\big)\otimes^L \overline{\mathscr L_\psi(\pi
t')}\Big)_{(s,\infty')} \\
\to\;& R\Phi_{\eta_{\infty'}}\Big({\mathrm{pr}}_1^\ast
K\otimes^L(g\times \mathrm{id}_{{\mathbb P}_{(\infty')}^1})^\ast
\overline{\mathscr L_\psi(\pi t')}\Big)_{(y,\infty)}.
\end{split}
\end{eqnarray}
Under the assumption of Lemma \ref{keylemma} (iii), taking the
$n$-th cohomology on both sides and using the definition of
$\mathscr F^{(0,\infty')}$ in \cite[2.4.2.3]{L}, we get a canonical
morphism
$$\mathscr F^{(0,\infty')}\Big(R^{n-1}\Phi_{g}(K)_{y}\Big)\to
R^n\Phi_{\eta_{\infty'}}\Big({\mathrm{pr}}_1^\ast K\otimes^L
(g\times \mathrm{id}_{{\mathbb P}_{(\infty')}^1})^\ast
\overline{\mathscr L_\psi(\pi t')}\Big)_{(y,\infty')}.$$

\begin{lemma}\label{globallemma}  Let $f:X\to S$ be a proper
morphism, and let $K\in\mathrm{ob}\, D_c^b(X,\overline{\mathbb
Q}_\ell)$. Suppose $X$ is regular pure of dimension $n$, $f$ is
smooth at points in $f^{-1}(s)-A$ for a finite set $A$ of
$k$-rational points in $f^{-1}(s)$ and the sheaves $\mathscr
H^q(K)|_{X-A}$ are lisse for all $q$. Then for all $x\in A$, the
canonical morphisms
$$R\Phi_{\eta_{\infty'}}\Big(p_1^\ast
\big(j_!(R\Phi_f(K))_{x}\big)\otimes^L \overline{\mathscr L_\psi(\pi
t')}\Big)_{(s,\infty')} \to
R\Phi_{\eta_{\infty'}}\Big({\mathrm{pr}}_1^\ast K\otimes^L(f\times
\mathrm{id}_{{\mathbb P}_{(\infty')}^1})^\ast \overline{\mathscr
L_\psi(\pi t')}\Big)_{(x,\infty)}$$ constructed above are
isomorphisms, where ${\mathrm{pr}}_1:X\times_k \mathbb
P^1_{(\infty')}\to X$ is the projection. Suppose furthermore that
$K$ is a lisse sheaf. Then the canonical morphisms
$$\mathscr F^{(0,\infty')}\Big(R^{n-1}\Phi_{f}(K)_{x}\Big)\to
R^n\Phi_{\eta_{\infty'}}\Big({\mathrm{pr}}_1^\ast K\otimes^L
(f\times \mathrm{id}_{{\mathbb P}_{(\infty')}^1})^\ast
\overline{\mathscr L_\psi(\pi t')}\Big)_{(x,\infty')}$$ are
isomorphisms for all $x\in A$.
\end{lemma}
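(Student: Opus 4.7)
The plan is to exploit the properness of $f$ in order to upgrade the canonical morphism constructed in the paragraphs just preceding the lemma to an isomorphism, and then to decompose both sides as direct sums indexed by $x\in A$ and conclude summandwise. Inspecting that construction, every step is already an isomorphism (smooth base change, the projection formula, and Lemmas \ref{techlemma} and \ref{canonical}) except for the canonical morphism $R\Psi_{\eta_{\infty'}}\circ R(g_{(y)}\times\mathrm{id})_\ast\to R(g_{(y)}\times\mathrm{id})_\ast\circ R\Psi_{\eta_{\infty'}}$. When $f$ is proper, so is $f\times\mathrm{id}_{\mathbb P^1_{(\infty')}}$, and the commutation of nearby cycles with proper direct image makes this step an isomorphism. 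Running the same construction with $f$ in place of $g_{(y)}$ therefore yields a global isomorphism
$$R\Phi_{\eta_{\infty'}}\Big(p_1^\ast Rf_\ast K\otimes^L\overline{\mathscr L_\psi(\pi t')}\Big)\stackrel{\sim}{\longrightarrow} R(f\times\mathrm{id})_\ast R\Phi_{\eta_{\infty'}}\Big(\mathrm{pr}_1^\ast K\otimes^L(f\times\mathrm{id})^\ast\overline{\mathscr L_\psi(\pi t')}\Big).$$

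Next I would decompose both sides as direct sums over $A$. By proper base change, $R\Phi_\eta(Rf_\ast K)\cong Rf_{s\ast}R\Phi_X(K)$, and Lemma \ref{prelemma}(i) forces $R\Phi_X(K)$ to be supported on the finite set of $k$-rational points $A$, so $R\Phi_\eta(Rf_\ast K)\cong\bigoplus_{x\in A}(R\Phi_X K)_x$. Feeding this into Lemma \ref{techlemma} decomposes the stalk at $(s,\infty')$ of the source as $\bigoplus_{x\in A} R\Phi_{\eta_{\infty'}}\big(p_1^\ast j_!(R\Phi_X K)_x\otimes^L\overline{\mathscr L_\psi(\pi t')}\big)_{(s,\infty')}$. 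On the target, Lemma \ref{keylemma}(i) says that $R\Phi_{\eta_{\infty'}}\big(\mathrm{pr}_1^\ast K\otimes^L(f\times\mathrm{id})^\ast\overline{\mathscr L_\psi(\pi t')}\big)$ is supported on $A\times\{\overline{\infty'}\}$, so its proper direct image is a skyscraper at $(s,\overline{\infty'})$ equal to $\bigoplus_{x\in A} R\Phi_{\eta_{\infty'}}\big(\mathrm{pr}_1^\ast K\otimes^L(f\times\mathrm{id})^\ast\overline{\mathscr L_\psi(\pi t')}\big)_{(x,\infty')}$.

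By naturality of the construction, the displayed global isomorphism matches the two decompositions summandwise, and on the summand indexed by $x$ it is precisely the canonical morphism appearing in the lemma for that $x$; this gives part (i). For part (ii), Lemma \ref{prelemma}(ii) concentrates $R\Phi_X(K)_x$ in degree $n-1$ and Lemma \ref{keylemma}(iii) concentrates the target in degree $n$; taking $H^n$ of the isomorphism from part (i) and recognizing its left-hand side as $\mathscr F^{(0,\infty')}(R^{n-1}\Phi_X(K)_x)$ via Laumon's definition \cite[2.4.2.3]{L} yields the second isomorphism. The main obstacle will be the summandwise compatibility check: tracing the combination of smooth base change, projection formula, and proper base change isomorphisms used in the construction to confirm that the summand labeled $x$ on the source is sent to the summand labeled $x$ on the target. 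Since both decompositions ultimately arise from localization at the point $x$, this diagram chase should be essentially formal, but it is the one step where care is genuinely needed.
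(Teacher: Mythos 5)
Your proposal is correct and follows essentially the same route as the paper: use properness to make the nearby-cycle/pushforward commutation an isomorphism, decompose the source via Lemma \ref{techlemma} together with $R\Phi(Rf_\ast K)\cong\bigoplus_{x\in A}(R\Phi_X(K))_x$ and the target via Lemma \ref{keylemma}(i), and match summands. The summandwise compatibility you flag as the delicate point is likewise asserted without further detail in the paper's own proof.
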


\begin{proof} The second statement follows from the first one. To prove the
first statement, it suffices to prove the above canonical morphisms
induce an isomorphism
$$\bigoplus_{x\in A}R\Phi_{\eta_{\infty'}}\Big(p_1^\ast
\big(j_!(R\Phi_f(K))_{x}\big)\otimes^L \overline{\mathscr L_\psi(\pi
t')}\Big)_{(s,\infty')} \to \bigoplus_{x\in A}
R\Phi_{\eta_{\infty'}}\Big({\mathrm{pr}}_1^\ast K\otimes^L(f\times
\mathrm{id}_{{\mathbb P}_{(\infty')}^1})^\ast \overline{\mathscr
L_\psi(\pi t')}\Big)_{(x,\infty)}.$$ Fix notation by the following
commutative diagram
$$\begin{array}{ccccc}
X\times_k \mathbb
P_{(\infty')}^1&\stackrel{f\times\mathrm{id}_{\mathbb
P_{(\infty')}^1}}\to&S\times_k\mathbb P_{(\infty')}^1&\stackrel
{p_2}\to&
\mathbb P_{(\infty')}^1\\
{\scriptstyle {\mathrm{pr}}_1}\downarrow&&{\scriptstyle p_1}\downarrow&&\downarrow\\
X&\stackrel{f}\to&S&\to& \mathrm{Spec}\,k.
\end{array}
$$
We have canonical isomorphisms
\begin{eqnarray*}
&&R\Psi_{\eta_{\infty'}}\Big(p_1^\ast Rf_{\ast}K
 \otimes^L \overline{\mathscr L_\psi(\pi t')}\Big)\\
&\cong& R\Psi_{\eta_{\infty'}}\Big(R(f\times\mathrm{id}_{\mathbb
P^1_{(\infty')}})_\ast {\mathrm{pr}}_{1}^\ast K\otimes^L
\overline{\mathscr L_\psi(\pi t')}\Big)\quad(\hbox{smooth base change theorem})\\
&\cong& R\Psi_{\eta_{\infty'}}\Big(R(f\times\mathrm{id}_{\mathbb
P^1_{(\infty')}})_\ast\big({\mathrm{pr}}_{1}^\ast K\otimes^L
(f\times\mathrm{id}_{\mathbb P^1_{(\infty')}})^\ast
\overline{\mathscr L_\psi(\pi t')}\big)\Big)\quad(\hbox{projection
formula})\\
&\cong & R(f\times\mathrm{id}_{\overline \infty'})_\ast
R\Psi_{\eta_{\infty'}}\Big({\mathrm{pr}}_1^\ast K\otimes^L(f\times
\mathrm{id}_{{\mathbb P}_{(\infty')}^1})^\ast \overline{\mathscr
L_\psi(\pi t')}\Big)\quad(\hbox{proper base change theorem})
\end{eqnarray*}
Since $p_1^\ast Rf_\ast K\otimes^L\overline{\mathscr L_\psi(\pi
t')}$ and ${\mathrm{pr}}_1^\ast K\otimes^L(f\times
\mathrm{id}_{{\mathbb P}_{(\infty')}^1})^\ast \overline{\mathscr
L_\psi(\pi t')}$ vanish on the fibers over $\infty'$, their
vanishing cycles complex and nearby cycles complex coincide. So the
stalk at $(s,\infty')$ of the composite of the above canonical
isomorphisms can be identified with
$$R\Phi_{\eta_{\infty'}}\Big(p_1^\ast Rf_\ast K\otimes^L\overline{\mathscr
L_\psi(\pi t')}\Big)_{(s,\infty')}\cong
\left(R(f\times\mathrm{id}_{\overline\infty'})_\ast
R\Phi_{\eta_{\infty'}} \Big({\mathrm{pr}}_1^\ast K\otimes^L(f\times
\mathrm{id}_{{\mathbb P}_{(\infty')}^1})^\ast \overline{\mathscr
L_\psi(\pi t')}\Big)\right)_{(s,\infty')}.$$ Applying Lemma
\ref{techlemma} to the complex $Rf_\ast K$ on $S$, we get
$$R\Phi_{\eta_{\infty'}}\Big(p_1^\ast Rf_\ast K\otimes^L\overline{\mathscr
L_\psi(\pi t')}\Big)\cong R\Phi_{\eta_{\infty'}}\Big(p_1^\ast
j_!R\Phi(Rf_\ast K)\otimes^L\overline{\mathscr L_\psi(\pi
t')}\Big).$$ So we get a canonical isomorphism
\begin{equation}\label{iso(3)}
\begin{split}{}
&R\Phi_{\eta_{\infty'}}\Big(p_1^\ast j_! R\Phi \big(Rf_\ast K\big)
\otimes^L\overline{\mathscr L_\psi(\pi t')}\Big)_{(s,\infty')}\\
\cong\;& \left(R(f\times\mathrm{id}_{\overline\infty'})_\ast
R\Phi_{\eta_{\infty'}} \Big({\mathrm{pr}}_1^\ast K\otimes^L(f\times
\mathrm{id}_{{\mathbb P}_{(\infty')}^1})^\ast \overline{\mathscr
L_\psi(\pi t')}\Big)\right)_{(s,\infty')}.
\end{split}
\end{equation}
By the proper base change theorem and lemma \ref{prelemma}, we have
\begin{eqnarray*}
R\Phi \big(Rf_\ast K\big)&\cong& Rf_{s\ast}(R\Phi_{f}(K))\\
&\cong& \bigoplus_{x\in A} (R\Phi_{f}(K))_{x}.
\end{eqnarray*}
By Lemma \ref{keylemma} (i) that we have shown at the beginning of
this section, we have
\begin{eqnarray*}
&&\left(R(f\times\mathrm{id}_{\overline\infty'})_\ast
R\Phi_{\eta_{\infty'}} \Big({\mathrm{pr}}_1^\ast K\otimes^L(f\times
\mathrm{id}_{{\mathbb
P}_{(\infty')}^1})^\ast \overline{\mathscr L_\psi(\pi t')}\Big)\right)_{(s,\infty')}\\
&\cong& \bigoplus_{x\in A} R\Phi_{\eta_{\infty'}}
\Big({\mathrm{pr}}_1^\ast K\otimes^L(f\times \mathrm{id}_{{\mathbb
P}_{(\infty')}^1})^\ast \overline{\mathscr L_\psi(\pi
t')}\Big)_{(x,\infty')}.
\end{eqnarray*}
So we can write the isomorphism (\ref{iso(3)}) as
\begin{eqnarray*}
&&\bigoplus_{x\in A} R\Phi_{\eta_{\infty'}}\Big(p_1^\ast j_!
(R\Phi_{f}(K))_{x}
\otimes^L\overline{\mathscr L_\psi(\pi t')}\Big)_{(s,\infty')}\\
&\cong& \bigoplus_{x\in A}
R\Phi_{\eta_{\infty'}}\Big({\mathrm{pr}}_1^\ast K\otimes^L(f\times
\mathrm{id}_{{\mathbb P}_{(\infty')}^1})^\ast \overline{\mathscr
L_\psi(\pi t')}\Big)_{(x,\infty')}.
\end{eqnarray*} This last isomorphism coincides with the morphism
induced by the canonical morphisms (\ref{iso(2)})
$$R\Phi_{\eta_{\infty'}}\Big(p_1^\ast j_!
(R\Phi_{f}(K))_{x} \otimes^L\overline{\mathscr L_\psi(\pi
t')}\Big)_{(s,\infty')}\to
R\Phi_{\eta_{\infty'}}\Big({\mathrm{pr}}_1^\ast K\otimes^L(f\times
\mathrm{id}_{{\mathbb P}_{(\infty')}^1})^\ast \overline{\mathscr
L_\psi(\pi t')}\Big)_{(x,\infty')}.$$ It follows that these
canonical morphisms are isomorphisms.
\end{proof}

We are now ready to prove Lemma \ref{keylemma} (ii) and (iv).

\begin{proof}[Proof of Lemma \ref{keylemma} (ii) and (iv)]
The canonical morphism (\ref{iso(2)}) defined before Lemma
\ref{globallemma}
$$R\Phi_{\eta_{\infty'}}\Big(p_1^\ast
\big(j_!(R\Phi_g(K))_{y}\big)\otimes^L \overline{\mathscr L_\psi(\pi
t')}\Big)_{(s,\infty')} \to
R\Phi_{\eta_{\infty'}}\Big({\mathrm{pr}}_1^\ast K\otimes^L(g\times
\mathrm{id}_{{\mathbb P}_{(\infty')}^1})^\ast \overline{\mathscr
L_\psi(\pi t')}\Big)_{(y,\infty)}$$ commutes with the base change on
the trait $S$. By \cite[Th. finitude 3.7]{DSGA}, to prove this
canonical morphism is an isomorphism, we can reduce to the case
where $k$ is algebraically closed and $S=\mathrm{Spec}\,k[[\pi]]$.
By \cite[XVI 2.5]{DK}, there exist a smooth projective variety $Z$
over $k$, a morphism $h:Z\to \mathbb P_k^1$ smooth outside a finite
set of closed points in $Z$, and a closed point $z$ of $Z$ such that
$h(z)=0$ and that there exists an $S$-isomorphism between the
henselization of $Y$ at $y$, and the henselization of
$Z\times_{\mathbb P_k^1} S$ at $z$, where $S$ is regarded as a
${\mathbb P}_k^1$-scheme by identifying $S$ with the completion of
$\mathbb P_k^1$ at $0$. To prove the canonical morphism (3) is an
isomorphism is a problem local with respect to the etale topology.
So we may reduce to the case where $Y$ is an open subscheme of
$X=Z\times_{\mathbb P_k^1}S$, $y=z$, and $g:Y\to S$ is induced by
the projection $f:X=Z\times_{\mathbb P_k^1}S\to S$. The morphism
$f:X\to S$ is projective, and is smooth at points in $f^{-1}(s)-A$
for a finite set $A$ of closed points in $f^{-1}(s)$. Our assertion
then follows from Lemma \ref{globallemma}.
\end{proof}

\section{Questions and conjectures}

Deligne makes the following conjecture (\cite{DLetterFu}):

\begin{conjecture} Let
$\Lambda$ be a noetherian torsion ring with the property that
$m\Lambda=0$ for some integer $m$ relatively prime to $p$. let
$S_1$, $S_2$, $S$ be henselian traits over $\mathrm{Spec}\,k$ of
equal characteristic $p$, all with residue field $k$, let $s_1$,
$s_2$, $s$ be their closed points, and let $a:S_1\times_k S_2\to S$
be a $k$-morphism such that $a(s_1,s_2)=s$ and such that $a(\cdot,
s_2):S_1\to S$ and $a(s_1,\cdot):S_2\to S$ are isomorphisms. For
each $i$, let $f_i:X_i\to S_i$ be a morphism of finite type, let
$x_i$ be a $k$-rational point in the special fiber such that
$f_i|_{X_i-x_i}$ is smooth, and let
$K_i\in\mathrm{ob}\,D^b_{ctf}(X_i,\mathbb Z/\ell^m)$ such that
$\mathscr H^q(K_i)|_{X_i-x_i}$ are locally constant for all $q$. (Or
more optimistically, one can just assume $K_i$ is universally
locally acylic outside $x_i$ relative to $f_i$.) Let $\eta_i$ be the
generic point of $S_i$, and let $j_i:\eta_i\hookrightarrow S_i$ be
the canonical open immersion.

(i) Under the above conditions, $a\circ(f_1\times f_2)$ is locally
acyclic relative to $K_1\boxtimes^LK_2$ outside $(x_1,x_2)$, and
hence the vanishing cycles complex $R\Phi_{a\circ(f_1\times
f_2)}(K_1\boxtimes^L K_2)$ relative to the morphism
$a\circ(f_1\times f_2)$ is supported at $(x_1,x_2)$. (Confer
\cite[Th. finitude 2.12]{DSGA} for the definition of local
acyclicity.)

(ii) There is a canonical isomorphism
\begin{eqnarray*}
&&R\Phi_{a}\Big(j_{1!}\big((R\Phi_{f_1}(K_1))_{x_1}\big)\boxtimes^L
j_{2!}\big((R\Phi_{f_2}(K_2))_{x_2}\big)\Big)_{(s_1,s_2)} \\
&\cong& \Big(R\Phi_{a\circ(f_1\times f_2)}(K_1\boxtimes^L
K_2)\Big)_{(x_1,x_2)},
\end{eqnarray*} where $(R\Phi_{f_i}(K_i))_{x_i}$
($i=1,2$) are complexes of $\Lambda$-modules with
$\mathrm{Gal}(\bar\eta_i/\eta_i)$-action and are regarded as objects
in $D^b_{ctf}(\eta_i, \Lambda)$.
\end{conjecture}

Due to the use of the Deligne-Fourier transformation, the method
used in this paper is not applicable to solve the conjecture unless
$a$ is directly related to the addition of the algebraic group
$\mathbb A_k^1$. Moreover, our method works only in characteristic
$p$. Another problem that is not treated in the paper is how the
Thom-Sebastiani theorem behaves with respect to the variation
morphism. This problem over $\mathbb C$ is treated by Deligne
\cite{DLetter} by topological methods.

\end{document}